\newtheorem{definition}{Definition}[section]
\newtheorem{theorem}[definition]{Theorem}
\newtheorem{lemma}[definition]{Lemma}
\newtheorem{remark}{Remark}
\def\a {\alpha}
\def\o {\omega}
\def\O {\Omega}
\def \veps {\varepsilon}
\def \vphi {\varphi}
\def\CP {\mathcal{P}}
\def\CI {\mathcal{I}}
\def\CF {\mathcal{F}}
\def\CM {\mathcal{M}}
\def\CB {\mathcal{B}}
\def\CS {\mathcal{S}}
\def\CR {\mathcal{R}}
\def\CA { \mathcal{A}}
\def\CR {\mathcal{R}}
\def\E {\mathbb{E}}
\def\P {\mathbb{P}}
\def\R {\mathbb{R}}
\def\N {\mathbb{N}}
\def\I {\mathbb{I}}
\def\Q {\mathbb{Q}}
\def\ti {\tilde{i}}
\def\tX {\tilde{X}}
\def\tnu {\tilde{\nu}}
\def\tbe {\tilde{\beta}}
\def\tlm {\tilde{\lambda}}
\def\tu {\tilde{u}}
\def\ta {\tilde{a}}
\def\tLm {\tilde{\Lambda}}
\def\tL{\widetilde{L}}
\def\tR{\widetilde{R}}
\def\tM{\tilde{M}}
\def\cO {\widehat{\Omega}}
\def\cF {\widehat{\CF}}
\def\cP {\widehat{\CP}}
\def\cW {\widehat{W}}
\def\cnun {\hat{\nu}^{(n)}}
\def\cLn {\hat{L}^{(n)}}
\def\cRn {\hat{R}^{(n)}}
\def\cin {\hat{i}^{(n)}}
\def\cben {\hat{\beta}^{(n)}}
\def\clmn {\hat{\lambda}^{(n)}}
\def\cLmn {\widehat{\Lambda}^{(n)}}
\def\cun {\hat{u}^{(n)}}
\def\cMn {\hat{M}^{(n)}}
\def \cLn{\hat{L}^{(n)}}
\def \cRn{\hat{R}^{(n)}}
\def\cbe {\hat{\beta}}
\def\cnu {\hat{\nu}}
\def\ci {\hat{i}}
\def\ca {\hat{a}}
\def\co {\hat{\omega}}
\def\cL {\hat{L}}
\def\cR {\hat{R}}
\def\cY {\hat{Y}}
\def\Minf {M_{\infty}^{(\a)}}
\begin{document}
\begin{frontmatter}
\title{Characterization of weak convergence of probability-valued solutions of general one-dimensional kinetic equations}
\runtitle{Convergence of solutions of kinetic equations}


\begin{aug}
\author{\fnms{Eleonora} \snm{Perversi}\thanksref{t1}\ead[label=e1]{eleonora.perversi@unipv.it}}
\and
\author{\fnms{Eugenio} \snm{Regazzini}\thanksref{t2}\ead[label=e2]{eugenio.regazzini@unipv.it}}

\thankstext{t1}{Work partially supported by MIUR-2012AZS52J-003 and by INdAM-GNAMPA Project 2014.
}
\thankstext{t2}{Also affiliated with CNR-IMATI, Milano, Italy. Work partially supported by MIUR-2012AZS52J-003.
}
\runauthor{E. Perversi and E. Regazzini}

\affiliation{Universit\`a degli Studi di Pavia}

\address{Universit\`a degli Studi di Pavia, Dipartimento di Matematica, via Ferrata 1
27100 Pavia, Italy\\
\printead{e1}\\
\phantom{E-mail:\ }\printead*{e2}}

\end{aug}

\begin{abstract}
For a general inelastic Kac-like equation recently proposed, this paper studies the long-time behaviour of its probability-valued solution. In particular, the paper provides necessary and sufficient conditions for the initial datum in order that the corresponding solution converges to equilibrium. The proofs rest on the general CLT for independent summands applied to a suitable Skorokhod representation of the original solution evaluated at an increasing and divergent sequence of times. It turns out that, roughly speaking, the initial datum must belong to the standard domain of attraction of a stable law, while the equilibrium is presentable as a mixture of stable laws. 
\end{abstract}

\begin{keyword}[class=MSC]
\kwd[Primary ]{60F05}
\kwd{82C40}
\kwd[; secondary ]{91B15}
\end{keyword}

\begin{keyword}
\kwd{Central limit theorem}
\kwd{inelastic Kac-like equations}
\kwd{(weak) Pareto laws} 
\kwd{Skorokhod representation theorem}
\kwd{stable laws}
\end{keyword}

\end{frontmatter}

\section{Introduction}\label{sec:intro}
There has been recent interest in some inelastic counterparts of the Kac one-dimensional Boltzmann-like equation. Particular attention has been paid to the convergence to equilibrium as, for example, in \cite{BaLaSelf,BaLaLD,BaLaMa,BaLaRe,BaPe,BonPerReg,GabReg2012,PulTos}. (See also \cite{BaLaMa3D} and references therein for multidimensional inelastic models.) Typically, one considers suitable initial data (in the form of probability laws) and proves that the ensuing solutions converge weakly to some distinguished probability distributions (p.d.'s, for short). Furthermore, remarkable efforts have been made to discover the rate of approach to equilibrium, exactly as in allied works on the classical kinetic equations such as \cite{CarlenCarvalhoGabetta2005,CarGabReg2008,DoleraGabettaRegazzini,DoleraRegazzini2010,GabReg2006,GabReg2008,GabettaRegazzini2010,McKean,McKean67} for one-dimensional models, and \cite{CarGabReg2007,CarGabTos,DoleraRegazzini2012} for multidimensional ones. New problems arise in connection with the following natural questions: Can one formulate necessary conditions on the initial data in order that they produce relaxation to equilibrium? Can one determine necessary and sufficient conditions for the same purpose? In the present paper, the former question is solved for a general form of the model introduced in \cite{BaLaMa}, while the latter is addressed with respect to a few specific cases including, in any case, the one that has been considered repeatedly. 

In the aforesaid general model, if $(v,w)$ and $(v',w')$ indicate pre-collisional and post-collisional velocities, respectively, of two colliding particles, one assumes that
\begin{equation}\label{collisioni}
\left\{ \begin{aligned}
&v'= \tL_1 v+ \tR_1 w\\
&w'= \tR_2 v+ \tL_2 w
\end{aligned}
\right.
\end{equation}
where $(\tL_1,\tR_1)$ and $(\tL_2,\tR_2)$ are random vectors in $\R^2$ with common p.d. $\tau$. In the rest of the paper, it is supposed that the function $\CS$ defined by
\begin{equation}\label{S}
\CS(p)=\int_{\R^2}(|l|^p+|r|^p)\tau(dl dr)-1\qquad(p\geq0)
\end{equation}
satisfies the condition:
\begin{equation}\label{ip1}
\begin{split}
&\text{The equation }\CS(p)=0\text{ admits at least one solution on $(0,+\infty)$} \\
& \text{and $\a$ will denote the smallest root.}
\end{split}
\end{equation}
Moreover, it is also assumed that 
$\tau$ has continuous marginals and
\begin{equation}\label{ip2}
(x_0,y_0)\in \text{supp}(\tau) \quad\text{ whenever  }\quad |x_0|^\a+|y_0|^\a=1.
\end{equation}
It is worth mentioning that both the Kac equation in \cite{Kac} and its inelastic direct counterpart given in \cite{PulTos} satisfy \eqref{ip1}-\eqref{ip2} for suitable $\a$ in $(0,2]$ and $\tau$. At this stage, the ensuing kinetic equation reads
\begin{equation}\label{eq}
\partial_t \mu_t +\mu_t= Q^+(\mu_t)\qquad(t\geq0)
\end{equation}
where $\mu_t$ is a time-dependent probability measure (p.m., for short) on the real line and $Q^+(\mu_t)$ is the p.m. specified by the Fourier-Stieltjes transform
\[
\widehat{Q^+(\mu_t)}(\xi):= \int_{\R^2}\vphi(t,l\xi)\vphi(t,r\xi)\tau(dl dr)
\]
where $\vphi(t,\xi):=\int_{\R}e^{i\xi v}\mu_t(dv)$ is the Fourier-Stieltjes transform $\widehat{\mu_t}$ of $\mu_t$. Following the terminology adopted for kinetic equations, $Q^+(\mu_t)$ can be seen as a sort of $2$-fold \textit{Wild convolution} of $\mu_t$ with itself. (See \cite{McKean}.) It is also important to recall that the Cauchy problem, obtained from the combination of \eqref{eq} with any initial p.d. $\mu_0$, has a unique solution. The proof of this fact is immediate by mimicking the argument used in \cite{BaLaMa}, where the support of $\tau$ is assumed to be a subset of $[0,+\infty)^2$. 

Returning to the original questions, complete answers have been given, until now, only for the Kac equation \cite{CarGabReg2008,GabReg2008} and for its direct inelastic counterpart \cite{BonPerReg,GabReg2012}. In both these cases, in order for the solution to converge, it is necessary and sufficient that the symmetrized form $\mu^*_0$ of the initial p.d. $\mu_0$ characterized through the probability distribution function (p.d.f., for short), by
\[
F^*_0(x)=\mu_0^*(-\infty,x]=\{\mu_0(-\infty,x]+\mu_0[-x,+\infty)\}/2\quad(x\in\R),
\]
belong to the \textit{standard domain of attraction} (s.d.a., for short) of the $\a$-stable distribution having Fourier-Stieltjes transform $\xi\mapsto e^{-k_\a|\xi|^\a}$, for some $k_\a$ in $[0,+\infty)$. This s.d.a. has to be meant as the class of all the p.m.'s with p.d.f.'s $F$ satisfying either
\begin{equation}\label{sda<2}
\begin{split}
&\lim_{x\to+\infty}x^\a F(-x)=c_1\text{ and }\lim_{x\to+\infty}x^\a[1-F(x)]=c_2\\
&\text{for some non-negative $c_1$ and $c_2$,}\text{ if }0<\a<2,
\end{split}
\end{equation}
or
\begin{equation}\label{sda2}
\begin{split}
&\int_{\R}x^2 dF(x)<+\infty\qquad\text{ if }\a=2.
\end{split}
\end{equation}

The limiting behaviour of the solution to \eqref{eq}, when \eqref{ip1} is in force for some $\a$ greater than $2$, is considered, for the first time, in Theorem \ref{Tha>2} of the present paper. Moreover, in line with the results obtained for the above-mentioned special cases, it will be proved that the symmetrized initial datum $\mu^*_0$ for \eqref{eq} must satisfy one of the two conditions \eqref{sda<2}-\eqref{sda2} $-$ with $F$ replaced by $F^*_0$ $-$ in order for the solution of the Cauchy problem to converge weakly. As far as sufficiency is concerned, it is shown that situations in which these conditions on $F^*_0$ turn out to be also sufficient, for example when $\tau$ is invariant w.r.t. $(\pi/2)$-rotations, coexist with others in which it is required that $\mu_0$ itself belongs to a specific s.d.a., for example when the support of $\tau$ is contained in $[0,+\infty)^2$. Of course, if $\mu_0$ is an element of some s.d.a., then $\mu^*_0$ is such, but $\mu^*_0$ can belong to some s.d.a. even if $\mu_0$ does not. (See the example in Appendix \ref{AppExample}.)

The rest of the paper is organized as follows. In Subsection \ref{sec:preliminaries}, a probabilistic representation of the solution to the general Cauchy problem is recalled. The new results are carefully formulated in Subsection \ref{sec:main}, while their proofs are deferred to Section \ref{sec:proof}. From a methodological viewpoint, it seems appropriate to highlight Lemma \ref{lemma2}, since it represents the key point in the proof of the main results. 

\section{Preliminaries and statement of the main results}\label{sec:prel&main}
Before providing a precise formulation for the new results, some facts about a probabilistic representation of the solution of the Cauchy problem are recalled. They are useful to grasp the connections of the issues raised in the introduction with the central limit problem.

\subsection{Probabilistic representation of the solution to the Cauchy problem}\label{sec:preliminaries}
This representation follows from associating a stochastic model with a system of many molecules colliding in pairs, in such a way that this very same model turns out to be consistent with the Wild-McKean representation of the solution of \eqref{eq}. (See $(8)$-$(9)$ in \cite{BaLaMa} for this point.) All the random elements one is about to consider are supposed to be defined as measurable functions on some measurable space $(\O,\CF)$, in such a way that their p.d.'s turn out to be images of a p.m. $\CP$ supported by $(\O,\CF)$. One starts by considering a distinguished molecule, and defines $\tnu_t$ to be the random number of particles that "contribute", according to the following scheme, to the velocity $V_t$ of the observed molecule at time $t$, for any $t>0$. (Cf. \cite{CarlenCarvalho}.) This idea of "contribution" is illustrated in the following example: Consider particles $1,2,\dots,7$ with initial velocities $X_1,\dots,X_7$, and assume that $2$ and $3$ collide before $2$ encounters $1$, and $3$ disappears; moreover, assume that $5$ and $6$ collide before $5$ encounters $4$, and $6$ disappears; to continue, suppose that $4$ collides with $7$, $5$ disappears and afterwards $4$ encounters $1$, while $7$ disappears. According to \eqref{collisioni}, due to this specific sequence of collisions, initial velocities $X_1,\dots,X_7$ change as follows: the first collision between $2$ and $3$ yields post-collisional velocities $X'_2=\tL_1 X_2+\tR_1 X_3$ and $X'_3=\tL_2 X_3+\tR_2 X_2$. Thus, the velocity of $2$ immediately before encountering $1$ is given by $X'_2$ and, hence, the post-collisional velocity
of $1$ is $X'_1=\tL_3 X_1+\tR_3 X'_2$. In the meantime, $5$ encounters $6$ and changes its velocity $X_5$ into $X'_5=\tL_4 X_5+\tR_4 X_6$ so that the velocity of $4$ immediately after the collision with $5$ is given by $X'_4=\tL_5 X_4+\tR_5 X'_5$. At this stage, this velocity changes, due to the collision with $7$, into $X''_4=\tL_6 X'_4+\tR_6 X_7$. Finally, the collision between $1$ and $4$ occurs with pre-collisional velocities $X'_1$ and $X''_4$, respectively, and then the post-collisional velocity of $1$ is $X''_1=\tL_7 X'_1+\tR_7 X''_4=\tL_7\tL_3 X_1+ \tL_7\tR_3\tL_1 X_2+\tL_7\tR_3\tR_1X_3+ \tR_7\tL_6\tL_5 X_4+\tR_7\tL_6\tR_5\tL_4 X_5+\tR_7\tL_6\tR_5\tR_4 X_6+\tR_7\tR_6 X_7$. This formula clarifies the meaning of the aforementioned term "contribution": $1$'s contribution to $V_t$ is $\tL_7\tL_3 X_1$, $2$'s contribution is $\tL_7\tR_3\tL_1 X_2$ and so on up to particle $7$, since $\tnu_t=7$. The above description can be visualized through the McKean tree of Figure 1.
\begin{figure}[h]
  \centering
  \setlength{\unitlength}{0.07\textwidth}
  \begin{picture}(7,5.5)(-3.5,-5)
    \thicklines
	\put(0,0){\circle*{0.3}}
    \put(0,0.3){$0$}    
    \put(0,0){\line(2,-1){2}} 
    \put(0,0){\line(-2,-1){2}} 
    \put(2,-1){\circle*{0.3}}
    \put(-2,-1){\circle*{0.3}}
    \put(-1.35,-0.4){$\tL_7$}
    \put(0.9,-0.4){$\tR_7$}
    \put(-2,-1){\line(1,-1){1}}
    \put(-2,-1){\line(-1,-1){1}}
    \put(-3,-2){\circle*{0.3}}
    \put(-3.3,-2.4){$1$}
    \put(-1,-2){\circle*{0.3}}    
    \put(-3.0,-1.5){$\tL_3$}
    \put(-1.45,-1.5){$\tR_3$}
    \put(2,-1){\line(1,-1){1}}
    \put(2,-1){\line(-1,-1){1}}
    \put(-1.8,-2.7){$\tL_1$}
    \put(-0.6,-2.7){$\tR_1$}
    \put(3,-2){\circle*{0.3}}
    \put(1,-2){\circle*{0.3}} 
    \put(2.55,-1.5){$\tR_6$}
    \put(1,-1.5){$\tL_6$}
    \put(3.2,-2.4){$7$}
    \put(-1,-2){\line(1,-2){0.6}}
    \put(-1,-2){\line(-1,-2){0.6}}
    \put(-0.4,-3.3){\circle*{0.3}}
    \put(-1.6,-3.3){\circle*{0.3}}
    \put(-1.9,-3.8){$2$}
    \put(-0.6,-3.8){$3$}
    \put(1,-2){\line(1,-2){0.6}}
    \put(1,-2){\line(-1,-2){0.6}}
    \put(0.2,-2.7){$\tL_5$}
    \put(1.4,-2.7){$\tR_5$}
    \put(0.2,-3.8){$4$}
    \put(0.4,-3.3){\circle*{0.3}}
    \put(1.6,-3.3){\circle*{0.3}}
    \put(1.6,-3.3){\line(1,-2){0.6}}
    \put(1.6,-3.3){\line(-1,-2){0.6}}
    \put(0.95,-4.6){\circle*{0.3}}
    \put(2.25,-4.6){\circle*{0.3}}
    \put(0.7,-4.1){$\tL_4$}
    \put(2.05,-4.1){$\tR_4$}
    \put(0.55,-5){$5$}
    \put(2.4,-5){$6$}
\end{picture}
\caption{}
\end{figure}\newline
The leaves are labelled, from left to right, by the labels $1,\dots,7$ of the seven particles contributing to $V_t$. This way, the contribution of each particle $j$ can be read as the product of the $\tL$'s and $\tR$'s one finds on the path which connects $j$ with the root node $0$. In general, there is a one-to-one correspondence between McKean trees and systems of many molecules colliding (in pairs) and changing their states according to \eqref{collisioni}. Therefore, both the number and the entities of the "contributions" to $V_t$ can be easily determined by resorting to trees like in Figure 1.

In point of fact, for the sake of realism, one thinks of these systems as governed by probabilistic, rather than deterministic, laws. In particular, $\tnu:=(\tnu_t)_{t\geq0}$ is seen as a pure birth (Yule-Furry) process on $\{1,2,\dots\}$ having birth rate $\lambda_n=n$ for each $n\geq1$, and the unit mass at $1$ as initial distribution. Then, $\CP\{\tnu_t=n\}$ coincides with the probability that the process starting at $1$ will be in state $n$ at time $t$, that is,
\[
\CP\{\tnu_t=n\}=e^{-t}(1-e^{-t})^{n-1}\qquad\qquad (n=1,2,\dots,\;t\geq0)
\]
with the proviso that $0^0=1$. As to the McKean trees, i.e., as to all possible forms of collision systems, one resorts to a discrete-parameter Markov chain $\ta=\{\ta_n:n=1,2,\dots\}$ defined as follows: The state space of $\ta$ is the set of all McKean trees and, for each value $n$ of the parameter, $\ta_n$ has to be meant as a random tree with $n$ leaves. The initial distribution, that is, the p.d. of $\ta_1$, is assumed to be the unit mass at the unique tree with one leaf. For every $a_n$ in the set $\CA_n$ of all trees with $n$ leaves $(n=1,2,\dots)$, the transition probabilities are specified as
\[
\CP\{\ta_{n+1}=a_{n+1}|\ta_n=a_n\}=\dfrac{1}{n}\I_{G(a_n)}(a_{n+1})
\]
where $G(a_n)$ denotes the set of the trees in $\CA_{n+1}$ obtained by attaching a copy of $\ta_2$ (which, of course, is non-random) to a specific leaf of $a_n$. To complete the picture, one introduces a sequence $(\tL,\tR)=((\tL_n,\tR_n))_{n\geq1}$ of independent and identically distributed (i.i.d., for short) random vectors with $\tau$ as common p.d. (see \eqref{ip1}-\eqref{ip2}) and a sequence $\tX=(\tX_n)_{n\geq1}$ of i.i.d. random numbers having the initial datum $\mu_0$ as common p.d.. It remains to specify that $\tnu, \ta, (\tL,\tR),\tX$ are supposed to be stochastically independent. Now, the contribution of each particle to $V_t$ can be fully described. One considers the random tree $\ta_{\tnu_t}$, a particle $j$ in $\{1,\dots,\tnu_t\}$ and the product $\tbe_{j,\tnu_t}$ of the $\tL$'s and $\tR$'s one encounters on the path that connects $j$ with the root node of $\ta_{\tnu_t}$ (the number of the edges of such a path will be called \textit{depth} of $j$). Then, in the general case,
\begin{equation}\label{V_t}
 V_t=\sum_{j=1}^{\tnu_t}\tbe_{j,\tnu_t}X_j\qquad(t\geq0).
\end{equation}
The connection between this stochastic model and the Cauchy problem attached to the Boltzmann-like equation \eqref{eq} lies in the important fact that the p.d. of $V_t$ satisfies \eqref{eq} with initial datum $\mu_0$. (See Proposition 1 in \cite{BaLaMa}.) Of course, recalling that $\vphi(t,\cdot)$ stands for the Fourier-Stieltjes transform of the solution $\mu_t$, and putting $\vphi_0(\cdot):=\vphi(0,\cdot)=\widehat{\mu_0}(\cdot)$ one gets 
\begin{equation*}
\begin{split}
\vphi(\xi,t)=\int_{\Omega}\prod_{j=1}^{\tnu_t(\omega)}\vphi_0(\tbe_{j,\tnu_t(\omega)}(\omega)\xi)\CP(d\omega),
\end{split}
\end{equation*}
providing a disintegration of the solution to the Cauchy problem associated with \eqref{eq} into components which are the p.d.'s of weighted sums of i.i.d. random numbers having common p.d,. $\mu_0$. It is just this disintegration which connects the relaxation to equilibrium of the kinetic model with the central limit problem of the probability theory. 

To pave the way for next developments it is worthwhile to recall an interesting fact about the $\tbe$'s, proved in \cite{BaLaMa}, Subsection 2.3 and Lemma 2: \textit{$\sum_{j=1}^{\tnu_t}|\tbe_{j,\nu_t}|^\a$ converges, with probability one, to a non-negative random number $\Minf$  satisfying $\E(\Minf)=1$}, where $\E$ stands for expectation w.r.t. $\CP$.

\subsection{New results}\label{sec:main}
The new results of the present paper are chiefly concerned with necessary conditions for weak convergence of the solution to the Cauchy problem. The first one provides an essential justification for the restriction of the admissible values of $\a$ to the interval $(0,2]$. In point of fact, this restriction has been conventionally, rather than substantially, so far accepted, and the next theorem confirms its reasonableness.

\begin{theorem}\label{Tha>2}
Suppose $\tau$ satisfies \eqref{ip1}-\eqref{ip2} with $\a>2$. If the solution $\mu_t$ to the Cauchy problem associated with \eqref{eq}, initial p.d. $\mu_0$, converges weakly to a p.d. $\mu_\infty$, as $t\to+\infty$, then both $\mu_0$ and $\mu_\infty$ must be unit masses at points $x_0$ and $x_1$, respectively.

More precisely:
\begin{itemize}
\item[$(i)$] One has $\mu_\infty=\mu_0=\delta_{x_0}$ with $x_0\neq0$ if $\tau\{(x,y)\in\R^2:\;x+y-1=0\}=1$. Conversely, if $\mu_\infty=\delta_{x_1}$ with $x_1\neq0$, then $\tau\{(x,y)\in\R^2:\;x+y-1=0\}=1$ and $x_0=x_1$. 
\item[$(ii)$] One has $\mu_\infty=\delta_0$ if and only if $($at least$)$ one of the two following conditions is verified: 
\begin{itemize} 
\item [$(ii_1)$] $x_0=0$
\item [$(ii_2)$] $\sum_{j=1}^{\tnu_t}\tbe_{j,\tnu_t}$ converges in distribution to zero, as $t$ goes to infinity.
\end{itemize}
\end{itemize}
\end{theorem}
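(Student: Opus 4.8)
The plan is to combine the probabilistic representation \eqref{V_t} with the factorised characteristic function $\vphi(t,\xi)=\E\big[\prod_{j=1}^{\tnu_t}\widehat{\mu_0}(\tbe_{j,\tnu_t}\xi)\big]$, the decisive point being that $\a>2$ makes the surviving weights $\ell^2$-divergent. Set $W_n^{(p)}:=\sum_{j=1}^n|\tbe_{j,n}|^p$. From Subsection~\ref{sec:preliminaries} one has $W_n^{(\a)}\to\Minf$ a.s., hence $\sup_n W_n^{(\a)}<+\infty$ a.s.; moreover $\Minf>0$ a.s., because the marginals of $\tau$ being continuous force $\tL_k,\tR_k\neq0$ a.s., so the one-step decomposition $\Minf\stackrel{d}{=}|\tL|^{\a}\Minf'+|\tR|^{\a}\Minf''$ (independent copies, cf.\ \cite{BaLaMa}) gives $\P\{\Minf=0\}=\P\{\Minf=0\}^2$, which forces $\P\{\Minf=0\}=0$ since $\E\Minf=1$. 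I then intend to invoke Lemma~\ref{lemma2} (a minimal-position estimate for the underlying weighted branching, in the spirit of \cite{BaLaMa}) in the form $\max_{1\le j\le n}|\tbe_{j,n}|\to0$ a.s. Since $\a>2$ yields $|\tbe_{j,n}|^{\a}\le(\max_i|\tbe_{i,n}|)^{\a-2}|\tbe_{j,n}|^{2}$, summing on $j$ gives $W_n^{(2)}\ge(\max_i|\tbe_{i,n}|)^{-(\a-2)}W_n^{(\a)}$, and therefore, using also $\tnu_t\uparrow\infty$ a.s.,
\begin{equation}\label{eq:ell2div}
W^{(2)}_{\tnu_t}=\sum_{j=1}^{\tnu_t}\tbe_{j,\tnu_t}^{2}\;\xrightarrow[t\to\infty]{}\;+\infty\qquad\text{a.s.}
\end{equation}

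With \eqref{eq:ell2div} in hand I would first show that $\mu_0$ must be a unit mass. Suppose not; put $\psi(y):=1-|\widehat{\mu_0}(y)|^{2}=\iint_{\R^{2}}\big(1-\cos(y(z-z'))\big)\mu_0(dz)\,\mu_0(dz')\ge0$, and note that, $\mu_0$ being non-degenerate, Fatou gives $\kappa_0:=\liminf_{y\to0}\psi(y)/y^{2}\in(0,+\infty]$, so $\psi(y)\ge\kappa y^{2}$ for $0<|y|\le y_0$ with suitable $\kappa,y_0>0$. From $|\widehat{\mu_0}(y)|\le e^{-\psi(y)/2}$ one gets $|\vphi(t,\xi)|\le\E\big[\exp(-\tfrac12\sum_{j=1}^{\tnu_t}\psi(\tbe_{j,\tnu_t}\xi))\big]$, while for any fixed $\xi\neq0$, keeping only the weights with $|\tbe_{j,\tnu_t}|\le y_0/|\xi|$,
\[
\sum_{j=1}^{\tnu_t}\psi(\tbe_{j,\tnu_t}\xi)\;\ge\;\kappa\xi^{2}\Big(W^{(2)}_{\tnu_t}-(y_0/|\xi|)^{-(\a-2)}\,\sup_n W^{(\a)}_{n}\Big)\;\xrightarrow[t\to\infty]{}\;+\infty\quad\text{a.s.}
\]
by \eqref{eq:ell2div}. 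Dominated convergence then forces $|\vphi(t,\xi)|\to0$ for every $\xi\neq0$, incompatible with the convergence $\vphi(t,\cdot)\to\widehat{\mu_\infty}$ to a characteristic function (continuous, equal to $1$ at $0$). Hence $\mu_0=\delta_{x_0}$.

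For the rest, the time-$s$ evolution map $\Phi_s$ is continuous for weak convergence — immediate from $\widehat{\Phi_s(\nu)}(\xi)=\E[\prod_j\widehat\nu(\tbe_{j,\tnu_s}\xi)]$ and bounded convergence — so $\Phi_s(\mu_\infty)=\lim_t\Phi_s(\mu_t)=\lim_t\mu_{t+s}=\mu_\infty$: thus $\mu_\infty$ is an equilibrium and the solution issued by $\mu_\infty$ is constant, so applying the previous paragraph to it gives $\mu_\infty=\delta_{x_1}$. With $\mu_0=\delta_{x_0}$, \eqref{V_t} reads $V_t=x_0B_t$ where $B_t:=\sum_{j=1}^{\tnu_t}\tbe_{j,\tnu_t}$, whence $\mu_t=\CL(x_0B_t)$ and $\mu_\infty=\delta_0$ if and only if $x_0=0$ or $B_t\to0$ in distribution, which is part $(ii)$. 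For part $(i)$: the identity $\widehat{\delta_a}(\xi)=\int_{\R^2}\widehat{\delta_a}(l\xi)\widehat{\delta_a}(r\xi)\,\tau(dl\,dr)$ shows $\delta_a$ is an equilibrium iff $a=0$ or $\tL+\tR=1$ a.s., and when $\tL_k+\tR_k\equiv1$ the recursion $B_{n+1}=B_n+\tbe_{j^{*},n}(\tL_n+\tR_n-1)$ gives $B_n\equiv B_1=1$, i.e.\ $\mu_t\equiv\delta_{x_0}$ for all $t$; combining these remarks with $\mu_\infty=\delta_{x_1}$ yields both assertions of $(i)$ (in particular $x_0=x_1$, and $x_0\neq0\Leftrightarrow x_1\neq0$).

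The main obstacle is \eqref{eq:ell2div} — concretely the input $\max_{j}|\tbe_{j,\tnu_t}|\to0$ a.s., which together with $\Minf>0$ a.s.\ is precisely the mechanism that fails for $\a>2$: the weight mass cannot concentrate on $o(1)$ many weights, so their $\ell^{2}$-norm explodes, whereas for $\a\le2$ it stays $O(1)$ (this is mirrored by $\CS(2)>0$, a consequence of the minimality of $\a$). Everything downstream — the characteristic-function estimate, the continuity of $\Phi_s$, and the bookkeeping for $(i)$–$(ii)$ — is routine. A variant of the second paragraph, closer to the Skorokhod/CLT viewpoint of the abstract, is to condition on $\CG:=\s(\tnu,\ta,(\tL,\tR))$, so that $V_t$ becomes the sum of the independent summands $\tbe_{j,\tnu_t}X_j$: this array is uniformly asymptotically negligible (since $\mu_0$ is non-degenerate and $\max_j|\tbe_{j,\tnu_t}|\to0$), while by \eqref{eq:ell2div} its truncated conditional variances diverge, so the Lindeberg theorem shows the conditional laws lose all mass on every bounded interval, contradicting the tightness of $\mu_t$.
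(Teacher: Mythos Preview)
Your argument is correct in substance and shares with the paper the decisive inequality
\[
\sum_{j}|\tbe_{j,n}|^{2}\;\ge\;\Big(\max_{j}|\tbe_{j,n}|\Big)^{-(\a-2)}\sum_{j}|\tbe_{j,n}|^{\a}\;\longrightarrow\;+\infty,
\]
from which both proofs conclude that a non-degenerate $\mu_0$ is incompatible with convergence. The packaging, however, is genuinely different. The paper stays inside the Skorokhod/CLT framework of Section~\ref{sec:proof}: it first extends the proof of Theorem~\ref{Th1} --- and hence the construction of Lemma~\ref{lemma2} --- to $\a>2$ in order to obtain $m_{0,2}<+\infty$, and then reads off $m_{0,2}=m_{0,1}^{2}$ from the finiteness of the variance functional $\sigma^{2}(\co)$ in the general CLT; degeneracy of $\mu_\infty$ is then checked by verifying that the L\'evy measure and $\sigma^{2}$ vanish when $\mu_0=\delta_{x_0}$. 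You instead bound $|\vphi(t,\xi)|\le\E\exp\big(-\tfrac12\sum_{j}\psi(\tbe_{j,\tnu_t}\xi)\big)$ directly and let the $\ell^{2}$-explosion force $|\vphi(t,\xi)|\to0$ for $\xi\neq0$; you then obtain the degeneracy of $\mu_\infty$ by the neat device of showing $\mu_\infty$ is a fixed point of $\Phi_s$ and feeding it back as an initial datum. Your route is more elementary and self-contained --- no Skorokhod representation, no general CLT, no preliminary moment bound via Lemma~\ref{lemma2} --- while the paper's route has the advantage of re-using verbatim the machinery already in place for Theorems~\ref{Th1}--\ref{Th2}. The treatment of $(i)$--$(ii)$ via $Q^{+}(\delta_{x_1})=\delta_{x_1}$ and the recursion for $\sum_j\tbe_{j,n}$ is essentially the same in both.

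One genuine slip: Lemma~\ref{lemma2} of this paper is \emph{not} a maximal-weight/minimal-position estimate. It is a construction of a single sample point $\co_0$ in the Skorokhod space with prescribed weight profiles, and it plays no role in your argument. The input you actually need, $\max_{1\le j\le n}|\tbe_{j,n}|\to0$, is Lemma~3 of \cite{BaLaMa} (the paper invokes it in the proofs of Theorems~\ref{ThSuff} and~\ref{Th2}), and it is stated there as convergence in probability, not almost surely. This is harmless for your purposes --- pass to a subsequence along which the convergence is a.s., and your characteristic-function bound still yields $|\vphi(t_{n},\xi)|\to0$ for every $\xi\neq0$, which already contradicts weak convergence of $\mu_t$ --- but both the reference and the claimed mode of convergence in your $\ell^{2}$-divergence display should be corrected.
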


\begin{proof}
See Subsection \ref{sec:proofTha>2}.
\end{proof}

The lesson of Theorem \ref{Tha>2} is that, if $\a>2$, unit masses are the sole admissible stationary distributions. From a logical viewpoint, it is worth noticing that condition \eqref{ip1} may coexist with $(i)$. Consider, for example the case in which $\tR_1=1-\tL_1$ almost surely and the p.d. of $\tL_1$ is the Gaussian centred at $1/2$ with variance $\sigma^2$. It is easy to show that there is a value of $\sigma^2$, say $\bar{\sigma}^2$, so that $\int_{\R}|x|^\a 1/\sqrt{2\pi\bar{\sigma}^2}\exp\{-(x-1/2)^2/2\bar{\sigma}^2\}dx =1/2$. In the same vein, \eqref{ip1} and $(ii_2)$ may coexist, for example, if $\tL_1=\tR_1$ almost surely and the p.d. of $\tL_1$ is the Gaussian centred at zero with variance $1/4$.\newline

The second, and more significant, group of results is concerned with values of $\a$ in $(0,2]$, when convergence to the steady state occurs in the presence of interesting forms of initial data. 

\begin{theorem}\label{Th1}
Suppose $\tau$ satisfies both the moment and support conditions \eqref{ip1}-\eqref{ip2} for some $\a$ in $(0,2]$. If the solution $\mu_t$ to the Cauchy problem associated with \eqref{eq}, initial datum $\mu_0$, converges weakly to a p.d., as $t\to+\infty$, then 
\begin{equation}\label{CNsimm}
\lim_{x\to+\infty}x^\a[1-F^*_0(x)] \text{ exists and is finite}.
\end{equation}
\end{theorem}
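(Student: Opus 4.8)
The plan is to exploit the disintegration of the solution into weighted sums of i.i.d.\ copies of $\mu_0$, together with the almost sure convergence $\sum_{j=1}^{\tnu_t}|\tbe_{j,\tnu_t}|^\a\to\Minf$ with $\E(\Minf)=1$, and to deduce the Pareto-type tail condition \eqref{CNsimm} from the classical characterization of the standard domain of attraction of a stable law. First I would pick a divergent sequence of times $t_n\uparrow+\infty$ and look at the random variable $V_{t_n}=\sum_{j=1}^{\tnu_{t_n}}\tbe_{j,\tnu_{t_n}}X_j$; conditionally on the whole collision structure $(\tnu,\ta,(\tL,\tR))$, this is a weighted sum of i.i.d.\ symmetrizable summands, and the weights $(\tbe_{j,\tnu_{t_n}})$ become, in the relevant scaling, asymptotically negligible while their $\a$-th absolute powers sum to something close to $\Minf>0$. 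Passing to a Skorokhod representation on a suitable probability space, weak convergence of $\mu_{t_n}$ to a nondegenerate limit forces (on the event $\{\Minf>0\}$, which has positive probability since $\E(\Minf)=1$) the conditional laws of these weighted sums to converge; then the general CLT for triangular arrays of independent summands (in the form giving necessary and sufficient conditions for convergence of row sums to an infinitely divisible, here stable, law) applies.

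The key step is to run the necessity direction of that CLT characterization. For a null array $\{c_{n,j}X_j\}$ with $\sup_j|c_{n,j}|\to0$ and $\sum_j|c_{n,j}|^\a$ bounded away from $0$ and $\infty$, convergence of $\sum_j c_{n,j}X_j$ (or of its symmetrization) to a nondegenerate law forces the tail behaviour of the common symmetrized law of the $X_j$: precisely, the Lévy–Khinchine data of the limit are built from limits of $\sum_j \P\{c_{n,j}X_j>x\}$ etc., and boundedness of these forces $x^\a\P\{|X_1^*|>x\}$ to be bounded, while existence of the limit (nondegeneracy, with the limit being $\a$-stable by the scaling structure $\sum|c_{n,j}|^\a\to\Minf$) forces $x^\a\P\{X_1^*>x\}$ and $x^\a\P\{X_1^*<-x\}$ each to converge. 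By symmetry of $F^*_0$ these two tail limits coincide, which is exactly \eqref{CNsimm}; the case $\a=2$ is the familiar degenerate tail giving finite second moment, and \eqref{CNsimm} holds with limit zero. I would extract this from Lemma \ref{lemma2} (flagged in the introduction as the crux) which presumably packages the conditional-CLT argument in the precise form needed here.

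The main obstacle is handling the \emph{random, correlated, non-triangular} nature of the weights $\tbe_{j,\tnu_t}$: they are not independent, their number $\tnu_t$ is random, and the natural normalization is the random quantity $\Minf^{1/\a}$ rather than a deterministic constant. To apply a CLT one must work conditionally on the $\sigma$-field generated by $(\tnu,\ta,(\tL,\tR))$ and verify, on a set of positive probability, that (a) $\max_j|\tbe_{j,\tnu_t}|\to0$ as $t\to+\infty$ — i.e.\ no single contribution dominates, which should follow from the tree dynamics and $\a>0$ — and (b) $\sum_j|\tbe_{j,\tnu_t}|^\a$ converges to a strictly positive limit, which is the cited result $\Minf$ with $\E(\Minf)=1>0$. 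One then has to transfer weak convergence of the \emph{unconditional} laws $\mu_{t_n}$ to convergence of the conditional laws along a subsequence and for a.e.\ realization in a positive-probability event; this is where the Skorokhod representation enters, so that the convergence can be read off pathwise and the CLT conditions checked realization by realization. A secondary technical point is ruling out the degenerate conditional limit (which would be compatible with, but not force, the tail condition): one must argue that if $\mu_{t_n}$ converges weakly and $\mu_0$ is not a unit mass, the conditional limits cannot all be degenerate, so that the genuine stable-domain constraint on $F^*_0$ is activated.
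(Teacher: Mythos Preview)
Your broad framework is right: pass to a Skorokhod representation, condition on the collision structure, and read off the necessity part of the general CLT for null arrays. The paper does exactly this, and from (16.36) in \cite{FristedtGray} obtains, for each $\co$, the existence and finiteness of
\[
\bar\nu(\co)[x_0,+\infty)=\lim_{n\to+\infty}2\sum_{j=1}^{\cnun(\co)}\Big[1-F^*_0\Big(\dfrac{x_0}{|\cben_{j,\cnun(\co)}(\co)|}\Big)\Big]
\]
at the continuity points $x_0$ of the random L\'evy measure. But this is all the CLT gives you, and here is the gap in your proposal: you assert that ``existence of the limit \dots\ forces $x^\a\P\{X_1^*>x\}$ and $x^\a\P\{X_1^*<-x\}$ each to converge''. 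That inference is not available for a generic realization of the weights. If, say, the $|\cben_{j,\cnun}|$ happen all to be close to some $1/z_n$, the convergence of the displayed sum only tells you that $z_n^\a[1-F^*_0(x_0 z_n)]$ converges \emph{along the particular sequence $(z_n)$ produced by that realization}. There is no general triangular-array theorem that upgrades this to convergence of $x^\a[1-F^*_0(x)]$ as $x\to+\infty$; the standard references (Ibragimov--Linnik, Feller) assume deterministic normalizations of the form $n^{-1/\a}$, which is not the structure here.

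This is exactly what Lemma~\ref{lemma2} is for, and your guess that it ``packages the conditional-CLT argument'' is off the mark. Lemma~\ref{lemma2} is a \emph{construction}: given any divergent sequence $(y_n)$ satisfying the growth condition \eqref{y_n}, it manufactures a single point $\co_0$ of the Skorokhod space at which, for each $n$, almost all of the weights $|\cben_{j,\cnun(\co_0)}(\co_0)|$ lie in a small two-sided window around $1/y_n$, and the exceptional ones have negligible total $\a$-mass. The paper then chooses $(y_n)$ to alternate between a subsequence achieving $I:=\liminf x^\a[1-F^*_0(x)]$ and one achieving $S:=\limsup x^\a[1-F^*_0(x)]$. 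At $\co_0$ the displayed limit exists and is finite; computing it along the odd and even subsequences gives the two-sided estimate $(1-\veps)S\leq I+\veps(S+\veps)$, whence $I=S<+\infty$. Without this construction (or something equivalent that lets you \emph{prescribe} the scales at which $1-F^*_0$ is sampled), the step from ``the row-sum limits exist for each $\co$'' to ``the full tail limit exists'' simply does not go through. Your worry about ruling out degenerate conditional limits is, incidentally, unnecessary: if the L\'evy measure at $\co_0$ is zero, the same inequalities give $I=S=0$, which is \eqref{CNsimm} with limit zero.
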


\begin{proof}
See Subsection \ref{sec:proofTh1}.
\end{proof}

Like the central limit theorem for weighted sums of i.i.d. summands, one could expect that the above condition is also sufficient for the convergence of the solution. On the one hand this is the case when, for example, $\tau$ agrees either with the Kac model or with its direct inelastic counterpart (see \cite{BonPerReg,CarGabReg2008,GabReg2008,GabReg2012} and the next more general Theorem \ref{Th4}). On the other hand the conjecture fails, for example, when the support of $\tau$ is a subset of $[0,+\infty)^2$, which is the version of \eqref{eq} most widely studied so far. To deal with this case, some additional notation is needed together with the following reformulation of \eqref{sda<2}, where $F_0$ is taken in the place of $F$ and $\a$ belongs to $(0,2]$: 
\begin{equation}\label{NDA}
\lim_{x\to+\infty}x^\a F_0(-x)=c_1\text{ and }\lim_{x\to+\infty}x^\a[1-F_0(x)]=c_2,
\end{equation}
where $c_1$ and $c_2$ are non-negative numbers. Moreover, let $m_{0,i}:=\int_{\R}x^i\mu_0(dx)$ for $i=1,2$ and let $\widehat{\psi_\a}(\cdot\;;\chi,k_\a,\gamma)$ indicate the Fourier-Stieltjes transform of the stable law $\psi_\a(\cdot\;;\chi,k_\a,\gamma)$, i.e.,
\[
\widehat{\psi_\a}(\xi;\chi,k_\a,\gamma)=\exp\Big\{i\chi\xi-k_\a|\xi|^\a\Big(1-i\gamma \dfrac{\xi}{|\xi|}\o(\xi,\a)\Big)\Big\},
\]
where 
\[
\begin{aligned}
\o(\xi,\a)&= \tan (\pi\a/2) & \a\neq1\\
& =2\pi^{-1}\log|\xi| & \a=1
\end{aligned}
\]
with the proviso that: \newline
\begin{itemize}
\item
$\chi:=0$, $k_\a:=[2\Gamma(\alpha)\sin(\pi\alpha/2)]^{-1}(c_1+c_2)\pi$ and $\gamma:=\I_{\{c_1+c_2>0\}}(c_2-c_1)/(c_1+c_2)$, if $\a\in(0,1)\cup(1,2)$;
\item $\chi:=\eta-\int_{\R\setminus\{0\}}[-\I_{(-\infty,-1]}(y)+y\I_{(-1,1]}(y)+\I_{(1,+\infty)}(y)-\sin y]\nu(dy)$, $k_1:=(c_1+c_2)\pi/2$ and $\gamma:=0$, when $\a=1$, $\eta$ and $\nu$ being characteristic parameters of the L\'evy-Khinchin representation of $\widehat{\psi_\a}$ according to Proposition 11 in Chapter 17 and Theorem 30 in Chapter 16 of \cite{FristedtGray}, with $Q_{1,n}=Q_{2,n}=\dots=\mu_0$; 
\item $\chi:=0$, $k_2:=(m_{0,2}-m_{0,1}^2)/2$ and $\gamma:=0$, if $\a=2$.
\end{itemize}

Coming back to the discussion about sufficiency, one starts by giving a more complete version of Theorems 1-3 in \cite{BaLaMa} and Theorem 2.3 in \cite{BaPe} with respect to the case of $\a=1$.

\begin{theorem}\label{ThSuff}
Suppose $\tau$ satisfies \eqref{ip1} for some $\a$ in $(0,2]$, with $supp(\tau)$ $\subset [0,+\infty)^2$, and 
\begin{equation}\label{p}
\CS(p)<0\qquad\text{for some }p>0.
\end{equation}
Then, the solution $\mu_t$ to the Cauchy problem associated with \eqref{eq}, initial datum $\mu_0$, converges weakly to a p.m. $\mu_\infty$, as $t\to+\infty$, if: 
\[
\begin{aligned}
&\text{condition \eqref{NDA} holds whenever $\a\in(0,1)$;}\\
&\text{condition \eqref{NDA} along with $c_1=c_2$ are met whenever $\a=1$;}\\
&\text{condition \eqref{NDA} and $m_{0,1}=0$ are in force whenever $\a\in(1,2)$;} \\
&\text{$m_{0,1}=0$ and $m_{0,2}<+\infty$ are valid whenever $\a=2$.} 
\end{aligned}
\]
Furthermore, the Fourier-Stieltjes transform of the limiting p.d. $\mu_\infty$ is given by
\begin{align}
&\int_{0}^{+\infty}\widehat{\psi_\a} (\xi m^{1/\a}; 0,k_\a ,\gamma)\nu_\a(dm) & \text{whenever }&\a\in(0,1)\cup(1,2)\notag\\
&\int_{0}^{+\infty}\widehat{\psi_1} (\xi m; \chi , k_1 ,0)\nu_1(dm) &\text{whenever }&\a=1\notag\\
&\int_{0}^{+\infty}\widehat{\psi_2} (\xi m^{1/2};0,k_2 ,0)\nu_2(dm)&\text{whenever }&\a=2\notag
\end{align}
for every $\xi\in\R$, where $\nu_\a$ is the p.d. of $\Minf$ for each $\a$ in $(0,2]$, and $\chi$, $k_\a$, $\gamma$ are the same as in the itemization preceding the theorem.
\end{theorem}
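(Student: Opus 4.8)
The plan is to read off everything from the probabilistic representation \eqref{V_t}. Recall that $\mu_t$ is the law of $V_t=\sum_{j=1}^{\tnu_t}\tbe_{j,\tnu_t}X_j$ and that, since $\mathrm{supp}(\tau)\subset[0,+\infty)^2$, all the weights $\tbe_{j,\tnu_t}$ are non-negative. Let $\CG$ be the sub-$\sigma$-field of $\CF$ generated by the ``environment'' $(\tnu,\ta,(\tL,\tR))$. Conditionally on $\CG$, for each fixed $t$ the summands $\tbe_{1,\tnu_t}X_1,\dots,\tbe_{\tnu_t,\tnu_t}X_{\tnu_t}$ are independent, each a non-negative multiple of a variable with law $\mu_0$, so that
\begin{equation}\label{plandisint}
\vphi(\xi,t)=\widehat{\mu_t}(\xi)=\E\big[g_t(\xi)\big],\qquad g_t(\xi):=\prod_{j=1}^{\tnu_t}\vphi_0\big(\tbe_{j,\tnu_t}\xi\big),
\end{equation}
and $g_t(\xi)$ is $\CG$-measurable with $|g_t(\xi)|\le1$. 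I would therefore reduce the theorem to the almost sure statement
\begin{equation}\label{plangoal}
g_t(\xi)\;\longrightarrow\;\widehat{\psi_\a}\big(\xi\,\Minf^{1/\a};\chi,k_\a,\gamma\big)\qquad(\CP\text{-a.s., as }t\to+\infty)
\end{equation}
for every $\xi\in\R$: granting \eqref{plangoal}, dominated convergence in \eqref{plandisint} yields $\widehat{\mu_t}(\xi)\to\int_0^{+\infty}\widehat{\psi_\a}(\xi m^{1/\a};\chi,k_\a,\gamma)\,\nu_\a(dm)$, with $\nu_\a$ the law of $\Minf$; the right-hand side, being a mixture of characteristic functions, is itself a characteristic function, continuous at $0$, and L\'evy's continuity theorem gives $\mu_t\Rightarrow\mu_\infty$ with the asserted Fourier--Stieltjes transform.

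To prove \eqref{plangoal} I would first isolate two deterministic facts valid on a $\CP$-full event $\O^*$. On one hand, the property of the $\tbe$'s recalled at the end of Subsection \ref{sec:preliminaries} gives $W_\a(t):=\sum_{j=1}^{\tnu_t}\tbe_{j,\tnu_t}^\a\to\Minf$ a.s. On the other hand, if $p$ is as in \eqref{p} then $p>\a$ (since $\CS(0)=1$ and $\a$ is the smallest root of $\CS$), and a direct computation on the branching mechanism underlying \eqref{V_t} shows $\E\big[\sum_{j=1}^{\tnu_t}\tbe_{j,\tnu_t}^p\big]=e^{\CS(p)t}$; as $\CS(p)<0$ and $t\mapsto\sum_{j}\tbe_{j,\tnu_t}^p$ is a non-negative supermartingale, it tends to $0$ a.s., whence the uniform negligibility
\[
\rho_t:=\max_{1\le j\le\tnu_t}\tbe_{j,\tnu_t}\le\Big(\sum_{j=1}^{\tnu_t}\tbe_{j,\tnu_t}^p\Big)^{1/p}\longrightarrow0\qquad(\CP\text{-a.s.}).
\]

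Next, on $\O^*$ and for fixed $\xi$, I would apply the general central limit theorem for arrays of independent summands, conditionally on $\CG$, to $\{\tbe_{j,\tnu_t}X_j:1\le j\le\tnu_t\}$. Writing $\log g_t(\xi)=\sum_j\big(\vphi_0(\tbe_{j,\tnu_t}\xi)-1\big)+R_t(\xi)$, with $R_t(\xi)$ controlled by $\max_j|\vphi_0(\tbe_{j,\tnu_t}\xi)-1|\cdot\sum_j|\vphi_0(\tbe_{j,\tnu_t}\xi)-1|$, the remainder is negligible because $\rho_t\to0$ while $\sum_j|\vphi_0(\tbe_{j,\tnu_t}\xi)-1|$ stays bounded (uniformly on $\O^*$, thanks to $W_\a(t)\to\Minf$). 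For $\a\in(0,1)\cup(1,2)$ the asymptotics of $\vphi_0$ at the origin implied by \eqref{NDA} read $\vphi_0(\eta)-1=-k_\a|\eta|^\a\big(1-i\gamma\tfrac{\eta}{|\eta|}\o(\eta,\a)\big)+o(|\eta|^\a)$ (the term $im_{0,1}\eta$ being absent when $\a\in(1,2)$ because $m_{0,1}=0$), so that $\sum_j(\vphi_0(\tbe_{j,\tnu_t}\xi)-1)\to-k_\a|\xi|^\a\big(1-i\gamma\tfrac{\xi}{|\xi|}\o(\xi,\a)\big)\Minf$; for $\a=2$ a Lindeberg estimate (legitimate since $m_{0,2}<+\infty$ and $\rho_t\to0$) together with $m_{0,1}=0$ gives the limit $-\tfrac12(m_{0,2}-m_{0,1}^2)\xi^2\Minf=-k_2\xi^2\Minf$. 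In each of these regimes the right-hand side is $\log\widehat{\psi_\a}(\xi\Minf^{1/\a};\chi,k_\a,\gamma)$ (with $\chi=0$), so \eqref{plangoal} follows and the proof closes via the reduction above.

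The hard part will be the case $\a=1$. There the summands $\tbe_{j,\tnu_t}X_j$ must be centered by their truncated means, and one has to show that the conditional centering sequences $\sum_j\E\big[\tbe_{j,\tnu_t}X_j\,;\,|\tbe_{j,\tnu_t}X_j|\le1\big]$ converge, identify the limit with $\chi\Minf$ via the L\'evy--Khinchin representation recalled before the statement, and verify $\gamma=0$. This is exactly where the extra hypothesis $c_1=c_2$ is indispensable: it makes the coefficient multiplying the genuinely unbounded part of the centerings vanish, leaving only contributions proportional to $\sum_j\tbe_{j,\tnu_t}\to\Minf$, whereas without it a divergent logarithmic term appears $-$ consistently with the fact that no convergence is then claimed. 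The remaining, essentially routine, obstacle is justifying the uniform negligibility $\rho_t\to0$ and the negligibility of $R_t(\xi)$ in the $\a=1$ regime, combining the elementary bounds with $\sum_j\tbe_{j,\tnu_t}\to\Minf$; this is the step for which hypothesis \eqref{p} is used.
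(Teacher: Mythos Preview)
Your overall strategy --- disintegrate via \eqref{plandisint}, prove the almost-sure convergence \eqref{plangoal} of the conditional characteristic function, then apply dominated convergence and L\'evy's continuity theorem --- is exactly the one used in the paper. The paper, however, organizes the casework differently and treats $\alpha=1$ by a shorter device than the one you outline.

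For $\alpha\in(0,1)\cup(1,2)$ and $\alpha=2$ the paper simply invokes Theorems~1 and~3 of \cite{BaLaMa}, whose proofs are precisely the ``local expansion of $\vphi_0$ at the origin plus $W_\alpha(t)\to\Minf$ plus $\rho_t\to0$'' argument you sketch. So your direct treatment of those cases is not new, just a recapitulation of the cited results; nothing is wrong, but you should be aware that this is already in the literature. One small imprecision: ``$\sum_j|\vphi_0(\tbe_{j,\tnu_t}\xi)-1|$ stays bounded (uniformly on $\O^*$)'' is not uniform in $\co$ --- the bound involves $\Minf(\co)$, which is random --- but pointwise boundedness is all you need.

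For $\alpha=1$ the paper does \emph{not} go through the general CLT with truncated centerings as you propose. Instead it appeals to Theorem~2.6.5 of Ibragimov--Linnik \cite{IbrLin}: under \eqref{NDA} with $c_1=c_2$ one has the exact representation
\[
\vphi_0(\xi)=\exp\{i\chi\xi-k_1|\xi|(1+\psi(\xi))\},\qquad \psi(\xi)=o(1)\ \text{as }|\xi|\to0,
\]
so that $\prod_j\vphi_0(\xi\tbe_{j,n})=\exp\{i\chi\xi\sum_j\tbe_{j,n}-k_1|\xi|\sum_j\tbe_{j,n}-k_1|\xi|\sum_j\tbe_{j,n}\psi(\xi\tbe_{j,n})\}$. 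Since $\sum_j\tbe_{j,n}\to M_\infty^{(1)}$ a.s.\ and $\max_j\tbe_{j,n}\to0$ (along sub-subsequences, via Lemma~3 of \cite{BaLaMa}), the error term vanishes and the limit is $\exp\{i\chi\xi M_\infty^{(1)}-k_1|\xi|M_\infty^{(1)}\}$ directly, with no centering to identify. Your route via truncated means would also work, but it requires showing that $\sum_j\tbe_{j,\tnu_t}\int_{|x|\le1/\tbe_{j,\tnu_t}}x\,dF_0(x)$ converges and identifying its limit with $\chi M_\infty^{(1)}$ --- doable, but noticeably longer than the Ibragimov--Linnik shortcut. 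The paper's approach buys a one-line passage to the limit once the representation of $\vphi_0$ is quoted; yours is more self-contained but leaves the hardest computation only sketched.

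A minor technical remark: the paper establishes $\max_j\tbe_{j,n}\to0$ only in probability (hence the sub-subsequence manoeuvre), whereas you claim it almost surely via a supermartingale argument for $\sum_j\tbe_{j,\tnu_t}^p$. Your argument is correct --- $e^{-\CS(p)t}\sum_j\tbe_{j,\tnu_t}^p$ is indeed a non-negative martingale --- and in fact slightly cleaner than the subsequence detour; you do not actually need $p>\alpha$ for this step, only $\CS(p)<0$.
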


\begin{proof}
See Subsection \ref{sec:proofThSuff}.
\end{proof}

Before presenting necessary and sufficient conditions, the following miscellaneous remarks could be in order.
\begin{remark}\label{remark1}
{\rm In view of Proposition 2 in \cite{BaLaMa}, one recalls that, when \eqref{p} is in force and $\a$ is the unique root of equation \eqref{ip1}, the p.d. $\nu_\a$ admits moments of any order. Moreover, if a second root $\theta$ exists, the only finite moments of $\nu_\a$ are those of order strictly smaller than $\theta/\a$. Combination of these facts with the well-known moment properties of the stable laws yields: If $\a<2$, then $\mu_\infty$ admits the $p$-th moment if and only if $p<\a$. If $\a=2$, then the $p$-th moment of $\mu_\infty$ is finite for $p>2$ if and only if $p<\theta$. }
\end{remark}
\begin{remark}\label{remark2}
{\rm An interesting problem is the search of conditions under which the limiting law $\mu_\infty$ is a (pure) stable law. A complete answer can be obtained by combining Theorems 1, 3 in \cite{BaLaMa} with Theorem 2.3 in \cite{BaPe} and allied results in \cite{AlsMei} on the fixed points of operators like $Q^+$. All things considered, one finds that $\nu_\a$ must be a point mass.}
\end{remark}
\begin{remark}\label{remark3}
{\rm Proposition 3.9 in \cite{BaPe} shows that whenever $\a$ belongs to $(0,2)$ the tails of non-degenerate limiting p.d.'s behave like $(x_0/|x|)^{\a}$ as $x\to\infty$, for suitable $x_0$. According to \cite{Mandelbrot}, these p.d.f.'s can be called \textit{weak Pareto laws}. Furthermore, limiting point masses arise when $c_1=c_2=0$.  }
\end{remark}
\begin{remark}\label{remark4}
{\rm As for the case of $\a=2$, in view of {\rm Remark \ref{remark1}},  it is worth distinguishing the following two subcases: If $\a$ is the sole root of equation \eqref{ip1}, since in this case $\mu_\infty$ has moments of any order, one can conclude that $F_\infty(-x)=1-F_\infty(x)=o(1/x^p)$ for every $p>0$, as $x\to+\infty$. On the other hand, if $\theta>\a$ is the second root of the equation in \eqref{ip1}, then $\int_{\R}|x|^p d\mu_\infty<+\infty$ for $p<\theta$ and $\int_{\R}|x|^\theta d\mu_\infty=+\infty$ (see Remark \ref{remark1}) so that, from the Markov inequality,
\[
F_\infty(-x)=1-F_\infty(x)\leq \dfrac{A_p}{x^p}\qquad(x>0)
\]
for every $p<\theta$ and $A_p:=\int_{\R}|x|^p\mu_\infty(dx)/2$. To obtain a lower bound for $F_\infty(-x)$ one notes that putting $G(y):=[2 F_\infty(y^{1/\theta})-1]\I_{[0,+\infty)}(y)$, one has $\int_{0}^{+\infty}y dG(y)=+\infty$. From Proposition 3.3. in \cite{CifReg} one obtains $\lim_{y\to+\infty}(1-G(y))g(y)=+\infty$ for every function $g$ continuous, strictly increasing and positive on $(a,+\infty)$ for some $a>0$ such that $\int_a^{+\infty}\{1/g(y)\}dy<+\infty$. Thus, choosing, for example, $g(y):=y(\log y)^{1+\delta}$ for $y>1$ and $\delta>0$, for every positive $M$ there exists $\bar{y}$ for which $1-G(y)\geq M/[y(\log y)^{1+\delta}]$ holds for every $y\geq\bar{y}$. Finally, from the definition of $G$, one gets
\[
F_\infty(-x)=1-F_\infty(x)=\dfrac{1}{2}(1-G(x^\theta))\geq \dfrac{M}{2 \theta^{1+\delta}}\dfrac{1}{x^\theta(\log x)^{1+\delta}}
\]
for every $x\geq \bar{y}^{1/\theta}$.}
\end{remark}

Resuming the main line of discussion, the way is paved for presenting necessary and sufficient conditions for the relaxation to equilibrium.

\begin{theorem}\label{Th2}
Suppose $supp(\tau)\subset [0,+\infty)^2$ and, for some $\a$ in $(0,2]$, \eqref{ip1}-\eqref{ip2} and \eqref{p} are in force. Then, the solution $\mu_t$ to the Cauchy problem associated with \eqref{eq}, initial p.d. $\mu_0$, converges weakly to a p.m. $\mu_\infty$, as $t\to+\infty$, if and only if
\[
\begin{aligned}
&\text{condition \eqref{NDA} holds whenever $\a\in(0,1)$;}\\
&\text{condition \eqref{NDA} along with $c_1=c_2$ are met whenever $\a=1$;}\\
&\text{condition \eqref{NDA} and $m_{0,1}=0$ are in force whenever $\a\in(1,2)$;} \\
&\text{$m_{0,1}=0$ and $m_{0,2}<+\infty$ are valid whenever $\a=2$.} 
\end{aligned}
\]
Of course, $\mu_\infty$ is the same as in {\rm Theorem \ref{ThSuff}}.
\end{theorem}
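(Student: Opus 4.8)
The sufficiency of the stated conditions is exactly Theorem \ref{ThSuff}, so only their necessity has to be proved. Assume then that $\mu_t$ converges weakly, as $t\to+\infty$, to a p.m.\ $\mu_\infty$. The opening move is to invoke Theorem \ref{Th1}: since \eqref{ip1}--\eqref{ip2} hold, \eqref{CNsimm} is in force, and because $1-F^*_0(x)=\tfrac12\{[1-F_0(x)]+F_0((-x)^-)\}$ this already makes $x^\a[1-F_0(x)]$ and $x^\a F_0(-x)$ bounded and their sum convergent to a finite limit, provisionally denoted $c_1+c_2$. What is still missing, and is genuinely stronger than membership of $\mu^*_0$ in a standard domain of attraction, is the splitting of that limit into two separate limits (i.e.\ \eqref{NDA} for $\mu_0$ itself) together with the moment/centering constraints when $\a\ge1$. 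The hypothesis $\mathrm{supp}(\tau)\subset[0,+\infty)^2$ is precisely what yields this extra information, and it cannot be dropped, as the example in Appendix \ref{AppExample} shows.

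The argument is carried out on the probabilistic representation \eqref{V_t}. Fix an increasing divergent sequence $(t_m)$ and pass to a Skorokhod representation, so that the law of $V_{t_m}$ is recovered by conditioning on $\tnu$, $\ta$ and $(\tL,\tR)$: conditionally, $V_{t_m}=\sum_{j=1}^{\tnu_{t_m}}\tbe_{j,\tnu_{t_m}}X_j$ is a weighted sum of i.i.d.\ copies of $\mu_0$ in which \emph{every} weight $\tbe_{j,\tnu_{t_m}}$ is nonnegative (this is where $\mathrm{supp}(\tau)\subset[0,+\infty)^2$ enters), $\max_j\tbe_{j,\tnu_{t_m}}\to0$ a.s., and $\sum_j\tbe_{j,\tnu_{t_m}}^{\a}\to\Minf$ a.s.\ with $\P(\Minf>0)>0$ since $\E(\Minf)=1$. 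The key analytic input, Lemma \ref{lemma2}, a conditional central limit theorem for uniformly asymptotically negligible triangular arrays of independent summands, then identifies the weak convergence of $\mu_{t_m}$ with the validity, $\CP$-a.s.\ in the environment, of the three classical criteria: convergence as $m\to+\infty$ of the truncated L\'evy measures $\sum_j\CP(\tbe_{j,\tnu_{t_m}}X_j\in\cdot)$ off the origin, of the truncated variances, and of the truncated first moments. Because the weights are nonnegative, the L\'evy-measure criterion \emph{splits}: for every $x>0$, both $\sum_j[1-F_0(x/\tbe_{j,\tnu_{t_m}})]$ and $\sum_jF_0(-x/\tbe_{j,\tnu_{t_m}})$ must converge.

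Set $u(y):=y^\a[1-F_0(y)]$, a bounded function by the first step. On $\{\Minf>0\}$ one has $\sum_j[1-F_0(x/\tbe_{j,\tnu_{t_m}})]=x^{-\a}\sum_j\tbe_{j,\tnu_{t_m}}^{\a}\,u(x/\tbe_{j,\tnu_{t_m}})$, whose multiplier $\sum_j\tbe_{j,\tnu_{t_m}}^{\a}$ tends to $\Minf\in(0,\infty)$ while the arguments $x/\tbe_{j,\tnu_{t_m}}$ all diverge uniformly in $j$; convergence of this expression for every $x$, together with the support condition \eqref{ip2} — which forces $\mu_\infty$ to be a genuine mixture of $\a$-stable laws, hence the limiting L\'evy measure to be proportional to $y^{-\a-1}\,dy$ on each half-line, so that oscillations of $u$ cannot be smoothed out by the random rescalings — yields $u(y)\to c_2$ and, symmetrically, $y^\a F_0(-y)\to c_1$ as $y\to+\infty$, i.e.\ \eqref{NDA}. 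The truncated first-moment criterion then supplies the residual conditions: it gives $m_{0,1}=0$ for $\a\in(1,2]$ and $c_1=c_2$ for $\a=1$ (the delicate centering), while for $\a=2$ the truncated-variance criterion upgrades $x^2[1-F^*_0(x)]=O(1)$ to $m_{0,2}<+\infty$ (so $c_1=c_2=0$), and for $\a\in(0,1)$ nothing beyond \eqref{NDA} is needed; the representation of $\mu_\infty$ is then the one in Theorem \ref{ThSuff}. The heart of the proof — and the step I expect to be the hardest — is precisely the passage from the environment-conditional triangular-array criteria to genuine one-sided limits for $x^\a[1-F_0(x)]$ and $x^\a F_0(-x)$: a priori the weights could average an oscillating $u$ into a constant, concealing the failure of \eqref{NDA}, and it is \eqref{ip2} that rules this out and simultaneously explains why the conclusion bears on $\mu_0$ itself and not merely on its symmetrization; the endpoints $\a=1$ and $\a=2$ require in addition the truncated-moment bookkeeping just indicated.
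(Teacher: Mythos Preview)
Your overall architecture matches the paper: sufficiency is Theorem \ref{ThSuff}; for necessity you pass to the Skorokhod representation, use the nonnegativity of the weights (from $\mathrm{supp}(\tau)\subset[0,+\infty)^2$) to split the conditional L\'evy-measure criterion into separate right- and left-tail limits, and then handle the centering/variance constraints for $\a\ge1$ via the truncated-moment conditions of the general CLT. The case-by-case treatment for $\a=1$, $\a\in(1,2)$ and $\a=2$ is essentially what the paper does.

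The genuine gap is at the step you yourself flag as the hardest: deducing that $u(y)=y^\a[1-F_0(y)]$ and $y^\a F_0(-y)$ each have a limit. Your proposed mechanism --- that \eqref{ip2} forces $\mu_\infty$ to be a mixture of $\a$-stable laws, hence the conditional L\'evy measure to be $\a$-homogeneous, hence oscillations of $u$ cannot be averaged out --- is circular: identifying $\mu_\infty$ as a stable mixture is the \emph{conclusion} of Theorem \ref{ThSuff}, which already presupposes \eqref{NDA}, and the fixed-point characterization of $Q^+$ does not by itself determine the \emph{conditional} L\'evy measure $\nu(\co)$ for a given environment. Knowing only that the right-tail sum $\sum_j[1-F_0(x/\cben_{j,\cnun}(\co))]$ converges for every $x$ and for $\cP$-a.e.\ environment does not, on its own, preclude a bounded oscillating $u$ being averaged to a constant by the random weights.

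You have also misidentified Lemma \ref{lemma2}: it is not a conditional CLT but a \emph{construction}. Given any $\veps>0$ and any divergent sequence $(y_n)$ satisfying \eqref{y_n}, Lemma \ref{lemma2} produces a single point $\co_0$ in the Skorokhod space at which $\cnun(\co_0)=N_n$ and, except for a set $\CR_n$ of indices whose total $\a$-mass is below $\veps$, the weights $|\cben_{j,N_n}(\co_0)|$ lie in a shrinking window around $1/y_n$. Condition \eqref{ip2} is used here, and only here: it guarantees that $\tau$ charges every neighbourhood of every point of $\{|l|^\a+|r|^\a=1\}$, which is exactly what is needed to assemble McKean trees with these prescribed weights; existence of $\co_0$ then follows from a compactness/finite-intersection argument. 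To obtain \eqref{NDA} one takes $(y_n)$ to alternate between a subsequence along which $u(y)$ tends to $I^+:=\liminf u$ and one along which it tends to $S^+:=\limsup u$: evaluated at $\co_0$, the right-tail sum is bounded above by $(I^++O(\veps))/x_0^\a$ along odd $n$ and below by $(1-\veps)S^+/x_0^\a$ along even $n$, and since the full limit exists this forces $I^+=S^+$. This is precisely the engine of Subsection \ref{sec:proofTh1}, now run on each one-sided tail separately because all weights are nonnegative. So \eqref{ip2} does not act through the shape of $\mu_\infty$; it acts by making the tailored environment $\co_0$ available.
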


\begin{proof}
See Subsection \ref{sec:proofTh2}.
\end{proof}

The next theorem, which includes both the Kac model and its inelastic counterpart, provides an example in which the weaker condition involved in Theorem \ref{Th1} turns out to be sufficient for the convergence. 

\begin{theorem}\label{Th4}
Suppose $\tau$ is invariant w.r.t. $(\pi/2)$-rotations and, for some $\a$ in $(0,2]$, \eqref{ip1}-\eqref{ip2} and \eqref{p} are in force. Then, \eqref{CNsimm} $[m_{0,2}<+\infty$, respectively$]$ is necessary and sufficient whenever $\a$ belongs to $(0,2)$ $[\a=2$, respectively$]$ in order that the solution $\mu_t$ to the Cauchy problem associated with \eqref{eq}, initial p.d. $\mu_0$, converge weakly to a p.m. $\mu_\infty$, as $t\to+\infty$.
\end{theorem}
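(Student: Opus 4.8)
The plan is to dispatch necessity quickly from the results already proved and to concentrate on sufficiency, where the crux is that the $(\pi/2)$-rotation invariance of $\tau$ injects enough sign randomness into the multiplicative weights $\tbe_{j,\tnu_t}$ to symmetrise the weighted sum $V_t$. For necessity, when $\a\in(0,2)$ the statement \eqref{CNsimm} is exactly the conclusion of Theorem \ref{Th1}, so nothing is left to do. When $\a=2$, Theorem \ref{Th1} only yields that $x^2[1-F^*_0(x)]$ has a finite limit, and I would upgrade this to $m_{0,2}<+\infty$ through the general CLT: since $\a=2$ is the root of $\CS(\cdot)=0$, one has $\sum_j|\tbe_{j,\tnu_t}|^2\to\Minf$ a.s.\ and $\max_j|\tbe_{j,\tnu_t}|\to0$, so, evaluating $V_t$ along a divergent sequence of times and invoking the Skorokhod representation of Lemma \ref{lemma2}, the necessity half of the CLT for infinitesimal arrays forbids weak convergence of $V_t$ unless $F^*_0$ lies in the standard domain of attraction of the normal law, i.e.\ $\int_\R x^2\,dF^*_0=m_{0,2}<+\infty$.

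For sufficiency, the first step I would carry out is a sign-balancing lemma for the weights. Applying the $(\pi/2)$-rotation twice gives invariance of $\tau$ under $(l,r)\mapsto(-l,-r)$, whence each marginal of $\tau$ is symmetric and, in particular, $\E[\mathrm{sgn}(\tL_n)|\tL_n|^\a]=\E[\mathrm{sgn}(\tR_n)|\tR_n|^\a]=0$. Writing $S_n:=\sum_{j=1}^n\mathrm{sgn}(\tbe_{j,n})|\tbe_{j,n}|^\a$ for the signed $\a$-mass carried by $\ta_n$, the transition rule of the Markov chain $\ta$ together with the vanishing of those two expectations gives $\E[S_{n+1}\mid\CF_n]=(1-1/n)S_n$, and a companion second-moment estimate — whose source term is dominated by $\sum_j|\tbe_{j,n}|^{2\a}\le\max_j|\tbe_{j,n}|^\a\sum_j|\tbe_{j,n}|^\a\to0$ — then forces $S_n\to0$ in probability, so $S_{\tnu_t}\to0$ as $t\to+\infty$. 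I would run the same estimate on the sub-families of leaves of a prescribed depth (equivalently, of weights of a prescribed dyadic size) to conclude that the balance holds scale by scale: the positive and negative $\a$-masses of $\{\tbe_{j,\tnu_t}\}$ coincide asymptotically on every scale, so every subsequential limit of the random weight configuration (in the sense made available by Lemma \ref{lemma2}) is symmetric in its sign coordinate, with total $\a$-mass $\Minf$.

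With this in hand I would follow the scheme of the proofs of Theorems \ref{ThSuff}--\ref{Th2}: fix $t_n\uparrow+\infty$, pass to the Skorokhod representation of Lemma \ref{lemma2} on which the weights and $\tnu_{t_n}$ converge a.s., and apply the general CLT to $V_{t_n}=\sum_j\tbe_{j,\tnu_{t_n}}X_j$ conditionally on the weights. Putting $\ell:=\lim_{x\to+\infty}x^\a[1-F^*_0(x)]$, condition \eqref{CNsimm} is equivalent to $\P(|X|>y)\sim 2\ell\,y^{-\a}$ for $\a<2$ (and to $m_{0,2}<+\infty$ for $\a=2$), so the sign symmetry of the limiting weights makes $\sum_j\P(\tbe_{j,\tnu_{t_n}}X_j>x)$ and $\sum_j\P(\tbe_{j,\tnu_{t_n}}X_j<-x)$ share the same limit, proportional to $\ell\,\Minf\,x^{-\a}$; the truncated variances converge to the appropriate limit (vanishing Gaussian part when $\a<2$, finite one when $\a=2$), and the centrings $m_{0,1}\sum_j\tbe_{j,\tnu_{t_n}}$ (relevant only for $\a\ge1$) remain under control because here $\E[\tL+\tR]=0$, which at the level of the kinetic equation forces the first moment of $\mu_t$ to decay to $0$. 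The general CLT then yields weak convergence of $V_{t_n}$ to a mixture, governed by $\Minf$, of symmetric ($\gamma=0$) stable laws of index $\a$ of the form displayed in Theorem \ref{ThSuff}, and, the limit being independent of $t_n$, $\mu_t$ converges. (Since $|\tbe_{j,n}|$ and $\CS$ depend on $\tau$ only through $(|l|,|r|)$, the martingale $\Minf$, the decay of $\max_j|\tbe_{j,n}|$ and all magnitude estimates are the same as in the case $supp(\tau)\subset[0,+\infty)^2$; it is only the one-sided tails of $\mu_0$ that get replaced, via the sign balance, by the symmetrised tail.)

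The main obstacle I anticipate is precisely the scale-by-scale balance used above: it does not suffice that the global signed $\a$-mass $S_{\tnu_t}$ vanish — one needs the positive and negative $\a$-masses to balance on every scale, so that the heavy-tail accounting in the general CLT delivers a symmetric limit even when $\mu_0$ has badly unbalanced, indeed possibly non-convergent, one-sided tails, only their symmetrisation being assumed to stabilise. Turning this into a proof means coupling the martingale estimate for $S_n$ with the uniform control of the weights across scales furnished by Lemma \ref{lemma2}, and checking that the residual oscillation of $y^\a\P(X>y)$ and $y^\a\P(X<-y)$ about their common mean contributes nothing in the limit, being integrated against the symmetric limiting $\a$-mass.
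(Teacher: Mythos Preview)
Your approach is genuinely different from the paper's, and the gap you yourself flag is real and, as stated, unresolved. The paper bypasses the whole issue of sign balance of the weights by working at the level of the Wild series: writing $\vphi(t,\xi)=e^{-t}\sum_{n\geq1}(1-e^{-t})^{n-1}\hat q_n(\xi;\vphi_0)$, it proves by induction that $\hat q_n(\xi;\vphi_0)=\hat q_n(\xi;\Re\vphi_0)$ for every $n\geq2$. The induction step uses the $(\pi/2)$-rotation invariance directly on the pairs $(\tL_1,\tR_1)$, $(-\tR_1,\tL_1)$, $(-\tL_1,-\tR_1)$ to kill the cross terms in $\E[\hat q_j(\tL_1\xi;\cdot)\hat q_{n-j}(\tR_1\xi;\cdot)]$ involving $\Im\vphi_0$. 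One then gets $\vphi(t,\xi)=e^{-t}\sum_{n\geq1}(1-e^{-t})^{n-1}\hat q_n(\xi;\Re\vphi_0)+i\,e^{-t}\Im\vphi_0(\xi)$, so up to an $e^{-t}$-decaying term the solution with initial datum $\mu_0$ coincides with the solution with the \emph{symmetric} initial datum $\mu_0^*$. Since $|\tL|,|\tR|$ are all that enter when the datum is symmetric, one may replace $\tau$ by the law of $(|\tL|,|\tR|)$ and invoke Theorem~\ref{Th2} verbatim; for $\mu_0^*$ the extra conditions $c_1=c_2$ and $m_{0,1}=0$ are automatic, so \eqref{CNsimm} (resp.\ $m_{0,2}<+\infty$) is both necessary and sufficient.

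Your route tries to push the symmetrisation through the weights rather than through the initial datum, and that is exactly where it stalls. Knowing that the global signed $\a$-mass $S_n=\sum_j\mathrm{sgn}(\tbe_{j,n})|\tbe_{j,n}|^\a$ vanishes is far from enough: in the CLT accounting one must show that $\sum_{j:\tbe_{j,n}>0}[1-F_0(x/\tbe_{j,n})]+\sum_{j:\tbe_{j,n}<0}F_0(-x/|\tbe_{j,n}|)$ converges, and each term here sees the one-sided tail $y\mapsto y^\a[1-F_0(y)]$, which under \eqref{CNsimm} alone need not have a limit (cf.\ Appendix~\ref{AppExample}). To make the oscillations cancel you would need, for each size level, the positive and negative $\tbe$'s to pair off with $o(1)$ defect in $\a$-mass, uniformly over levels --- a statement that Lemma~\ref{lemma2} does not supply and that your martingale estimate on $S_n$ does not localise. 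Absent this, the argument does not close. The paper's reduction to $\mu_0^*$ dissolves the obstacle: once the datum is symmetric, $1-F_0$ and $F_0(-\cdot)$ coincide and no sign pairing is needed.
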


\begin{proof}
See Subsection \ref{sec:proofTh4}.
\end{proof}

One notes the redundancy of assumption \eqref{ip2} in the sufficiency part of the last theorem.

\section{Proofs}\label{sec:proof}
Some crucial parts of the proofs are based on the \textit{Skorokhod representation} for sequences which converge in law. (See, e.g., Theorem 6.7 in \cite{Billingsley}.) It is worth recalling such a representation for the sake of expository clarity, even if analogous descriptions have already been given in \cite{BaLaRe,BonPerReg,DoleraRegazzini2012,ForLadReg,GabReg2012}. Before proceeding to apply the Skorokhod theorem, it is useful to introduce some slight changes to the presentation in Subsection \ref{sec:preliminaries}. In particular, one replaces the probability space $(\O,\CF,\CP)$ with the family $\{(\O,\CF,\CP_t):\;t\geq0\}$. The random elements $(\tL,\tR)$, $\tX$ are maintained, while the roles of $\tnu$ and $\ta$ are respectively played by:
\begin{itemize}
\item A random number $\tnu$ taking values in $\N$ whose p.d., under $\CP_t$, is given by $\CP_t\{\tnu=n\}=e^{-t}(1-e^{-t})^{n-1}$ for every $n\geq1$ and $t\geq0$.
\item A sequence $\ti:=(\ti_n)_{n\geq1}$ of integer-valued random numbers which, under $\CP_t$, are independent, each $\ti_n$ being uniformly distributed on $\{1,\dots,n\}$, for every $t\geq0$.
\end{itemize}

It is easy to verify that each realization of the sequence $\ti$ specifies a McKean tree, and that the distributional properties of $\ti$ agree with the Markov structure of the law of $\ta$. 

According to Subsection \ref{sec:preliminaries}, the random elements $\tnu, \ti, (\tL,\tR), \tX$ are assumed to be \textit{stochastically independent} under each $\CP_t$. 

These points accepted, one introduces a random vector $W$, which contains all the elements that characterize convergence in agreement with the general form of the central limit theorem, 
\[
W=W(\o):=(\tnu(\o),\ti(\o),(\tL(\o),\tR(\o)), \tbe(\o), \tlm(\o),\tLm(\o),\tM(\o),\tu(\o))
\]
for every $\o$ in $\O$, with:
\begin{itemize}
\item The same $\tbe=(\tbe_{j,n}:\;j=1,\dots,n)_{n\geq1}$ as in Subsection \ref{sec:preliminaries}, that can now be expressed through the following recursive relation
\begin{equation}\label{betaricorsivo}
\begin{array}{ll}
&\tbe_{1,1}=1\\
&(\tbe_{1,n+1},\dots,\tbe_{n+1,n+1})=(\tbe_{1,n},\dots,\tbe_{\ti_n-1,n},\tbe_{\ti_n,n}\tL_n,\tbe_{\ti_n,n}\tR_n, \\
&\qquad\qquad\qquad\qquad\qquad\qquad\qquad\tbe_{\ti_n+1,n},\dots,\tbe_{n,n})\quad(n\geq1).
\end{array}
\end{equation}
\item $\tlm=\tlm(\o):=(\tlm_1(\o),\dots,\tlm_{\tnu(\o)}(\o),\delta_0,\delta_0,\dots)$ where, for each $j$ in $\{1,\dots,$ $\tnu(\o)\}$, $\tlm_j(\o)$ is the p.d. determined by the characteristic function $\xi\mapsto\vphi_0(\tbe_{j,\tnu(\o)}(\o)\xi)$, $\xi\in\R$.
\item $\tLm=$convolution of the elements of $\tlm$.
\item $\tM(\o)$ is the p.d. of $\sum_{j=1}^{\tnu(\o)}|\tbe_{j,\tnu(\o)}|^\a$, where $\tbe_{j,\tnu(\o)}$ is the same as $\tbe_{j,n}$ with $n=\tnu(\o)$.
\item $\tu:=(\tu_k)_{k\geq1}$, with $\tu_k=\max_{1\leq j\leq\tnu}\tlm_j([-\frac{1}{k},\frac{1}{k}]^c)$ for every $k\geq1$.
\end{itemize}
Introducing the symbol $\CP(M)$ to denote the set of all p.m.'s on the Borel class $\CB(M)$ of a metric space $M$, one can say that the range of $W$ is a subset of 
\[
S:=\overline{\N}\times \overline{\N}^\infty\times (\overline{\R}^2)^\infty\times \overline{\R}^\infty\times (\CP(\overline{\R}))^\infty\times \CP(\overline{\R})\times \CP(\overline{\R})\times[0,1]^\infty.
\]
Here, $\CP(\overline{\R})$ is metrized consistently with the topology of weak convergence of p.m.'s so that it can be seen as a separable, compact and complete metric space. Thus, $S$ can be metrized so that it results in a separable, compact and complete metric space (Theorems 6.2, 6.4 and 6.5 in Chapter 2 of \cite{Parthasarathy}). Obviously, the family of p.m.'s $\{\CP_tW^{-1}:t\geq0\}$ is \textit{uniformly tight} on $\CB(S)$, and any subsequence from this family contains a weakly convergent subsequence $Q_n:=\CP_{t_{n}}W^{-1}$ with $0\leq t_1<t_2<\dots$ and $t_n \nearrow +\infty$. Hence, the Skorokhod's representation theorem can be applied to state the existence of a probability space $\Big(\cO,\cF,\cP\Big)$ and of a sequence of $S$-valued random elements 
\[
\cW_n:=(\cnun,\cin,(\cLn,\cRn),\cben,\clmn,\cLmn,\cMn,\cun),\qquad n\geq1
\]
defined on $\cO$ so that:
\begin{itemize}
\item The p.d. of $\cW_n$ is $Q_n$, for every $n$.
\item $\cW_n$ converges pointwise to a random element $\cW$ whose p.d. is the weak limit of $(Q_n)_{n\geq1}$.
\end{itemize}
From the first point, the equalities
\begin{equation}\label{ricorrenzabeta}
\begin{array}{ll}
&\cben_{1,1}=1\\
& (\cben_{1,k+1},\dots,\cben_{k+1,k+1})=(\cben_{1,k},\dots,\cben_{\cin_k-1,k},\cLn_k\cbe_{\cin_k,k},\\
&\qquad\qquad\qquad\qquad\qquad\qquad\qquad\cRn_k\cben_{\cin_k,k},\cben_{\cin_k+1,k},\dots,\cben_{k,k})
\end{array}
\end{equation}
are met for every $k$ and $n$, with $\cP$-probability $1$. This paves the way for two preparatory lemmas in which $\CM([1,+\infty))$ represents the set of all the finite measures on $\CB([1,+\infty))$.

\begin{lemma}\label{lemma1}
If $\nu\colon \cO\rightarrow\CM([1,+\infty))$ is a random finite measure, then there exists a countable subset $\CI$ of $[1,+\infty)$ such that, for every $x_0$ in $\CI^c\cap (1,+\infty)$, $\nu\{x_0\}(\co)=0$ for every $\co$ in a subset $\cO_{x_0}$ of $\cO$ with $\cP(\cO_{x_0})=1$.
\end{lemma}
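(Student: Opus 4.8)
The plan is to exploit the fact that a finite measure on the line can have only countably many atoms, and then to aggregate this ``atomically-countable'' property over the sample space $\cO$ by integrating. First I would observe that, for each fixed $\co\in\cO$, the finite measure $\nu(\co)$ on $\CB([1,+\infty))$ has at most countably many atoms; but this set of atoms depends on $\co$, so a pointwise argument alone does not produce a single countable exceptional set. The remedy is to pass to the expected measure. Define $\bar{\nu}(B):=\int_{\cO}\nu\{B\}(\co)\,\cP(d\co)$ for $B\in\CB([1,+\infty))$; measurability of $\co\mapsto\nu\{B\}(\co)$ is part of the hypothesis that $\nu$ is a random finite measure, and one should check (via a monotone-class / Dynkin argument, using that $\nu(\co)$ is countably additive for each $\co$ and dominated, say, by an integrable envelope so as to legitimately interchange sum and integral) that $\bar{\nu}$ is itself a finite measure on $\CB([1,+\infty))$.

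Next I would set $\CI:=\{x\in[1,+\infty):\bar{\nu}\{x\}>0\}$. Since $\bar{\nu}$ is a finite measure, $\CI$ is countable: for each $m\geq1$ the set $\{x:\bar{\nu}\{x\}>1/m\}$ is finite (it has cardinality at most $m\,\bar{\nu}([1,+\infty))$), and $\CI$ is the countable union of these finite sets. Now fix $x_0\in\CI^c\cap(1,+\infty)$. By definition of $\CI$ one has $\bar{\nu}\{x_0\}=0$, i.e.
\[
\int_{\cO}\nu\{x_0\}(\co)\,\cP(d\co)=0.
\]
Since the integrand $\co\mapsto\nu\{x_0\}(\co)$ is non-negative, it must vanish $\cP$-almost surely; that is, the set $\cO_{x_0}:=\{\co\in\cO:\nu\{x_0\}(\co)=0\}$ satisfies $\cP(\cO_{x_0})=1$. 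This is exactly the assertion of the lemma.

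The only delicate point is the measurability and $\sigma$-finiteness bookkeeping needed to make $\bar{\nu}$ a bona fide finite measure and to justify the interchange of integration with the countable additivity of $\nu(\cdot)(\co)$; concretely, one wants $\co\mapsto\nu\{[1,+\infty)\}(\co)=\|\nu(\co)\|$ to be integrable (or at least $\cP$-a.s.\ finite, with a suitable truncation), which is presumably either built into the meaning of ``random finite measure'' in the intended applications or can be arranged by restricting to the event $\{\|\nu\|<\infty\}$ of full probability. Everything else is the standard ``a finite measure has countably many atoms'' argument, promoted from a single measure to the averaged measure $\bar{\nu}$. I expect this measurability/integrability step to be the main (though minor) obstacle; the atom-counting and the Markov-type ``non-negative integral zero implies a.s.\ zero'' step are routine.
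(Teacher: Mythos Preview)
Your proof is correct and considerably simpler than the paper's. The only loose end you flag---integrability of $\co\mapsto\|\nu(\co)\|$---is real (the hypothesis ``random finite measure'' only guarantees $\|\nu(\co)\|<\infty$ almost surely, not $\E\|\nu\|<\infty$), but it is handled cleanly by the normalization you hint at: replace $\nu(\co)$ by $\nu(\co)/(1+\|\nu(\co)\|)$, a random sub-probability measure with exactly the same atoms, and run your argument on that.

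The paper takes a genuinely different route. It never forms the expected measure. Instead it compares, for each $t\geq1$, the distribution functions $F_t(x)=\cP\{\nu[t,+\infty)\leq x\}$ and $G_t(x)=\cP\{\nu(t,+\infty)\leq x\}$: the point $t$ is an atom of $\nu$ with positive probability precisely when $F_t\not\equiv G_t$. To show this happens for only countably many $t$, the paper conditions on $\{\nu[1,+\infty)\leq k\}$ (thereby sidestepping the integrability issue), and then runs an interval-packing argument: if $G_{t,k}(\bar x)>F_{t,k}(\bar x)$ for uncountably many $t$ at a common point $\bar x$, the intervals $(F_{t,k}(\bar x),G_{t,k}(\bar x)]$ would be uncountably many pairwise disjoint subintervals of $[0,1]$, a contradiction. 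Your ``average the measure and count atoms'' argument is far more economical; the paper's approach has the virtue of never needing a bound on the total mass, but pays for it with a page of bookkeeping that your normalization trick renders unnecessary.
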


\begin{proof}
See Section \ref{prooflemma1} in Appendix \ref{sec:Appendix1}.
\end{proof}

\begin{lemma}\label{lemma2}
If $\a$ agrees with \eqref{ip1}-\eqref{ip2}, then for every $\veps>0$ and every strictly increasing and divergent sequence $(y_n)_{n\geq1}$ such that 
\begin{equation}\label{y_n}
y_1^\a>\dfrac{1}{\veps}\qquad \text{and}\qquad y_{n+1}^\a>\dfrac{1}{\veps}\sum_{j=1}^n (y_j^\a+1) \qquad(n\geq1),
\end{equation}
there exist:
\begin{itemize}
\item a point $\co_0$ in $\cO$,
\item an integer-valued, strictly increasing and divergent sequence $(N_n)_{n\geq0}$, with $N_0:=1$,
\item a sequence of sets $(\CR_n)_{n\geq1}$ with $\CR_n\subset\{1,\dots,N_n\}$ and $|\CR_n|=N_{n-1}$ for every $n\geq1$,
\item an array $(\delta_k^{(n)})_{n\geq1, k=1,\dots,n}$ of positive real numbers
\end{itemize}
for which $\cnun(\co_0)=N_n$ and
\begin{equation}\label{betaLemma}
\begin{array}{ll}
|\cben_{j,\cnu^{(k)}(\co_0)}(\co_0)|\in \Big[\dfrac{1}{y_k+\delta_k^{(n)}},\dfrac{1}{y_k}\Big] & \text{if $k$ is odd}\\
|\cben_{j,\cnu^{(k)}(\co_0)}(\co_0)|\in \Big[\dfrac{1}{y_k},\dfrac{1}{y_k-\delta_k^{(n)}}\Big] & \text{if $k$ is even}
\end{array}
\end{equation}
for every $n\geq1$, $k=1,\dots,n$ and for every $j\notin \CR_k$. Moreover, for every $n\geq1$ and every odd number $k$ in $\{1,\dots,n\}$, 
\begin{equation}\label{sommaLemma}
\sum_{j\in\CR_k}|\cben_{j,\cnu^{(k)}(\co_0)}(\co_0)|^\a<\veps.
\end{equation}
\end{lemma}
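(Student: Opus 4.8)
The plan is to produce $\co_0$ by an explicit inductive construction of a McKean tree, scale by scale, the $k$-th scale being the target modulus $1/y_k$; the integers $N_n$, the sets $\CR_n$ and the slacks $\delta_k^{(n)}$ are read off at the end. The only facts about $\tau$ that enter are that, by \eqref{ip2}, the whole curve $\{(l,r):\ |l|^{\a}+|r|^{\a}=1\}$ is contained in $\mathrm{supp}(\tau)$, so every neighbourhood of any of its points has positive $\tau$-mass, and that, the marginals being continuous, a generic such neighbourhood lies in $\{l\neq0,\ r\neq0\}$, so the prescribed splits are genuine. The basic mechanism is that when $(\cLn_k,\cRn_k)$ is chosen close to that curve the $\a$-mass $\sum_{j=1}^{m}|\cben_{j,m}(\co_0)|^{\a}$ of the $m$-th generation is essentially conserved along the recursion \eqref{ricorrenzabeta}, so that one may pour mass from one scale to the next without creating or destroying it.

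In the inductive step one has, at size $N_{k-1}$, of order $y_{k-1}^{\a}$ ``good'' leaves of modulus near $1/y_{k-1}$, carrying all but a fraction $\veps$ of the $\a$-mass, together with the $N_{k-2}$ leaves forming $\CR_{k-1}$. One then splits each leaf of modulus $v$ of the $(k-1)$-st generation into $\lfloor v^{\a}y_k^{\a}\rfloor$ leaves of modulus just below $1/y_k$ (when $k$ is odd) or just above it (when $k$ is even), plus a single remainder leaf of modulus below $1/y_k$; the remainder leaves are collected into $\CR_k$, so that $|\CR_k|=N_{k-1}$ automatically, the good leaves number $N_k-N_{k-1}$, and $N_k$ is thereby determined. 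Since each remainder has $\a$-mass $<1/y_k^{\a}$ and there are $N_{k-1}$ of them, and since the same bookkeeping gives $N_{k-1}\asymp\sum_{j<k}(y_j^{\a}+1)$, the growth hypothesis \eqref{y_n} (with $y_1^{\a}>1/\veps$ for the base case $k=1$) forces $\sum_{j\in\CR_k}|\cben_{j,\cnu^{(k)}(\co_0)}(\co_0)|^{\a}<N_{k-1}/y_k^{\a}<\veps$, which is \eqref{sommaLemma}; the one-sided location of the good moduli relative to $1/y_k$ demanded by \eqref{betaLemma} is imposed directly through the choice of the multipliers, while the positive slack $\delta_k^{(n)}$ is reserved to absorb rounding and the discrepancy discussed next.

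It then remains to realise this pattern on $\cO$ and to extract one $\co_0$ serving every $n$. For each fixed $n$, the event on $\cO$ that $\cnun(\co)=N_n$ and that the first $N_n-1$ splits of the $n$-th array fall in the prescribed neighbourhoods of $\{|l|^{\a}+|r|^{\a}=1\}$, with the $\ti$-indices taking their prescribed values, has, by the independence under $\CP_{t_n}$ of $\tnu$, $\ti$ and $(\tL,\tR)$, $\cP$-probability equal to $e^{-t_n}(1-e^{-t_n})^{N_n-1}\cdot\prod_{m=1}^{N_n-1}m^{-1}\cdot(\text{a positive product of }\tau\text{-masses})>0$; and since $t_n\nearrow+\infty$, the divergent sequence $(N_n)$ may be taken as large as the construction demands while these probabilities remain positive. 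To single out a $\co_0$ common to all $n$, one uses that $S$, hence $S^{\N}$, is compact and that the $\beta$-array is a continuous function of the splitting data via \eqref{ricorrenzabeta}: the limiting configuration $\cW(\co_0)$ built above, with the good moduli placed strictly inside the intervals of \eqref{betaLemma}, has the exact scale structure, and for each fixed $n$ the finitely many entries $\cben_{j,\cnu^{(k)}(\co_0)}(\co_0)$, $k\le n$, differ from their limits by amounts that are absorbed into $\delta_k^{(n)}$; a nested-intersection argument over the closed, positive-probability events constraining $(\cW_1,\dots,\cW_n)$ then yields $\co_0$, whereupon $N_n=\cnun(\co_0)$, $\CR_n$ and the $\delta_k^{(n)}$ give all the assertions.

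The main obstacle is precisely this last passage: the bounds in \eqref{betaLemma} must hold for \emph{every} $n\ge k$, not merely for $n$ large, whereas the Skorokhod coupling of $(\cW_n)_{n\ge1}$ is known only through the pointwise convergence $\cW_n\to\cW$. Consequently the inductive choice of the splitting pattern has to be interleaved with the choice of the positive-probability events constraining the low-index arrays as well; fortunately each such constraint depends on only finitely many coordinates of $\cW_n$, and the full support on $\{|l|^{\a}+|r|^{\a}=1\}$ granted by \eqref{ip2} again supplies the freedom needed to keep all these events simultaneously of positive $\cP$-probability. By contrast the $\a$-mass estimate \eqref{sommaLemma} is routine once \eqref{y_n} is in hand.
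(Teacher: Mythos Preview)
Your proposal follows essentially the same strategy as the paper: an inductive tree construction, scale by scale, placing the bulk of the leaves at modulus near $1/y_k$ with one remainder per parent leaf (so $|\CR_k|=N_{k-1}$), combined with the positive-probability of each finite configuration via \eqref{ip2}, and a compactness argument over nested closed events to produce a single $\co_0$ good for every $n$. Your derivation of \eqref{sommaLemma} from the crude bound $N_{k-1}\le\sum_{j<k}(y_j^\a+1)$ together with \eqref{y_n} is exactly the paper's bookkeeping, and your reading of the splitting multipliers off the curve $\{|l|^\a+|r|^\a=1\}$ is the paper's Step~1.

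The one place where your write-up wobbles is the extraction of $\co_0$. Your appeal to ``the limiting configuration $\cW(\co_0)$'' and to the pointwise convergence $\cW_n\to\cW$ is a red herring: the Skorokhod limit plays no role in the paper's argument, and trying to use it runs into exactly the obstacle you yourself flag, namely that \eqref{betaLemma} must hold for \emph{every} $n\ge k$, not just large $n$. The paper sidesteps this entirely by working in the auxiliary compact space $M=\overline{\N}^\infty\times(\times_j\overline{\N}_j^\infty)\times(\times_j(\overline{\R}_j^2)^\infty)$, defining for each $n$ a closed set $A_n\subset M$ by explicit interval constraints $I_k(\delta_1^{(k)},\dots,\delta_k^{(k)})$ on the first $n$ arrays, arranging $A_1\supset A_2\supset\cdots$ by shrinking the $\delta$'s (so $\delta_k^{(n+1)}\le\delta_k^{(n)}$), and invoking the finite intersection property. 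The $\delta_k^{(n)}$ are thus not residual slacks ``absorbing rounding'': they are chosen at step $n$, small enough that the intervals are proper and that $\sum_j((\cLn_j)^\a+(\cRn_j)^\a-1)$ stays controlled, and then further shrunk at step $n+1$ to accommodate the new constraints. Your last two sentences (``a nested-intersection argument over the closed, positive-probability events constraining $(\cW_1,\dots,\cW_n)$'') do land on this correct mechanism; discard the $\cW$-limit heuristic and make the nested closed sets explicit, and your sketch becomes the paper's proof.
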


\begin{proof}
See Section \ref{prooflemma2} in Appendix \ref{sec:Appendix1}.
\end{proof}

\subsection{Proof of {\rm Theorem \ref{Th1}}}\label{sec:proofTh1}
With reference to the Skorokhod representation, assuming that $\mu_t$ converges weakly as $t\to+\infty$ is equivalent to saying that the p.d. $\cLmn(\co)$ converges weakly to a p.d., as $n\to+\infty$, for every $\co$ in $\cO$. Then, by the central limit theorem (see, for example, (16.36) in \cite{FristedtGray}), there exists a random L\'evy measure $\nu=\nu(\co)$ such that 
\[
\nu(\co)[x,+\infty)=\lim_{n\to+\infty}\sum_{j=1}^{\cnun(\co)}\clmn_j(\co)[x,+\infty)
\]
and
\[
\nu(\co)(-\infty,-x]=\lim_{n\to+\infty}\sum_{j=1}^{\cnun(\co)}\clmn_j(\co)(-\infty,-x]
\]
hold for every $\co$ in $\cO$ and for every $x>0$ with $\nu(\co)\{x\}=\nu(\co)\{-x\}=0$. Now, in view of the definitions given at the beginning of this section, 
\[
\begin{split}
&\clmn_j(\co)[x,+\infty)\\
&\qquad\qquad=\Big[1-F_0\Big(\dfrac{x}{|\cben_{j,\cnun(\co)}(\co)|}\Big)+\mu_0\Big\{\dfrac{x}{|\cben_{j,\cnun(\co)}(\co)|}\Big\} \Big]\I_{\{\cben_{j,\cnun(\co)}(\co)>0\}}\\
&\qquad\qquad+F_0\Big(-\dfrac{x}{|\cben_{j,\cnun(\co)}(\co)|}\Big)\I_{\{\cben_{j,\cnun(\co)}(\co)<0\}}
\end{split}
\]
and, analogously,
\[
\begin{split}
&\clmn_j(\co)(-\infty,-x]\\
&\qquad\qquad=F_0\Big(-\dfrac{x}{|\cben_{j,\cnun(\co)}(\co)|}\Big)\I_{\{\cben_{j,\cnun(\co)}(\co)>0\}}\\
&\qquad\qquad+\Big[1-F_0\Big(\dfrac{x}{|\cben_{j,\cnun(\co)}(\co)|}\Big)+\mu_0\Big\{\dfrac{x}{|\cben_{j,\cnun(\co)}(\co)|}\Big\}\Big]\I_{\{\cben_{j,\cnun(\co)}(\co)<0\}}.
\end{split}
\]
Hence, recalling the definition of $F^*_0$,
\[
\begin{split}
&\clmn_j(\co)[x,+\infty)+\clmn_j(\co)(-\infty,-x]\\
&\qquad\qquad\qquad=2\Big[1-F^*_0\Big(\dfrac{x}{|\cben_{j,\cnun(\co)}(\co)|}\Big)\Big]+\mu_0\Big\{\dfrac{x}{|\cben_{j,\cnun(\co)}(\co)|}\Big\}.
\end{split}
\]
Thus,
\[
\begin{split}
&\nu(\co)(-\infty,-x]+\nu(\co)[x,+\infty)\\
&\quad=\lim_{n\to+\infty}\Big(2\sum_{j=1}^{\cnun(\co)}\Big[1-F^*_0\Big(\dfrac{x}{|\cben_{j,\cnun(\co)}(\co)|}\Big)\Big]+\sum_{j=1}^{\cnun(\co)}\mu_0\Big\{\dfrac{x}{|\cben_{j,\cnun(\co)}(\co)|}\Big\}\Big)
\end{split}
\]
is valid for every $\co$ in $\cO$ and for every $x>0$ such that $\nu(\co)\{x\}=\nu(\co)\{-x\}=0$. At this stage, for every $x>1$, one defines the random measure $\bar{\nu}\colon \cO\rightarrow \CM([1,+\infty))$ by
\[
\bar{\nu}(\co)[x,+\infty):=\nu(\co)(-\infty,-x]+\nu(\co)[x,+\infty).
\]
This way, one can apply Lemma \ref{lemma1} to state the existence of a countable subset $\CI$ of $[1,+\infty)$ such that, for every $x_0$ in $\CI^c\cap (1,+\infty)$, there exists a subset $\cO_{x_0}$ of $\cO$ with $\cP(\cO_{x_0})=1$, such that $\bar{\nu}(\co)\{x_0\}=0$ for every $\co$ in $\cO_{x_0}$. Now, keeping $x_0$ fixed, notice that, without real loss of generality (because of \eqref{ricorrenzabeta} combined with the independence of $(\cLn_1,\cRn_1),(\cLn_2,\cRn_2),\dots$ and the continuity assumption on the marginals of $\tau$) one can suppose $\cO_{x_0}$ is contained in $\{\co\in\cO:\;x \cdot|\cben_{j,\cnun(\co)}(\co)|^{-1}\notin D_{F_0},\;\forall j=1,\dots,\cnun(\co),\;n\geq1\}$ where $D_f$ denotes the discontinuity set of the function $f$. Hence,
\begin{equation}\label{limiteNu}
\bar{\nu}(\co)[x_0,+\infty)=\lim_{n\to+\infty}2\sum_{j=1}^{\cnun(\co)}\Big[1-F^*_0\Big(\dfrac{x_0}{|\cben_{j,\cnun(\co)}(\co)|}\Big)\Big] 
\end{equation}
for every $\co\in\cO_{x_0}$. Going on, one defines $I:= \liminf_{x\to+\infty}x^\a(1-F^*_0(x))$ and $S:=\limsup_{x\to+\infty}x^\a(1-F^*_0(x))$. It has to be proved that $I=S<+\infty$. Let $(i_m)_{m\geq1}$ and $(s_m)_{m\geq1}$ be increasing and divergent sequences of positive real numbers such that
\[
\lim_{m\to+\infty}i_m^\a(1-F^*_0(i_m))=I\quad\text{and}\lim_{m\to+\infty}s_m^\a(1-F^*_0(s_m))=S.
\]
Given any $\veps>0$, there are subsequences $(i'_m)_{m\geq1}\subset(i_m)_{m\geq1}$ and $(s'_m)_{m\geq1}\subset(s_m)_{m\geq1}$ so that, by defining
\[
\begin{array}{ll}
x_n&:= i'_m  \qquad\text{ if }n=2m-1\\
&:=s'_m  \qquad\text{ if }n=2m,
\end{array}
\]
one gets $x_1^\a>1/\veps$ and $x_{n+1}^\a>\sum_{i=1}^n x_i^\a /\veps$ for every $n\geq1$. Then, one puts $z_n:=x_n/x_0$ for every $n\geq1$, $x_0$ being the same as in \eqref{limiteNu}, for the purpose of bounding
\[
\sum_{j=1}^{\cnun(\co)}\Big[1-F^*_0\Big(\dfrac{x_0}{|\cben_{j,\cnun(\co)}(\co)|}\Big)\Big]
\]
for every $n$ at the point $\co=\co_0$ determined through the application of Lemma \ref{lemma2} to $(\veps$, $(z_n)_{n\geq1})$. Along the subsequence $n=2m-1$, one gets 
\[
\begin{split}
&\lim_{m\to+\infty}\sum_{j=1}^{\cnu^{(2m-1)}(\co_0)}\Big[1-F^*_0\Big(\dfrac{x_0}{|\cbe^{(2m-1)}_{j,\cnu^{(2m-1)}(\co)}(\co)|}\Big)\Big]\\
&\quad =\lim_{m\to+\infty}\Big(\sum_{j\notin\CR_{2m-1}}+\sum_{j\in\CR_{2m-1}}\Big)\Big[1-F^*_0\Big(\dfrac{x_0}{|\cbe^{(2m-1)}_{j,N_{2m-1}}(\co)|}\Big)\Big]
\end{split}
\]
\[
\begin{split}
&\quad\leq \limsup_{m\to+\infty}\Big(\sum_{j\notin\CR_{2m-1}}[1-F^*_0(x_0 z_{2m-1})]+\sum_{j\in\CR_{2m-1}}\Big[1-F^*_0\Big(\dfrac{x_0}{|\cbe^{(2m-1)}_{j,N_{2m-1}}(\co)|}\Big)\Big]\\
&\quad\qquad\times\dfrac{x_0^\a}{|\cbe^{(2m-1)}_{j,N_{2m-1}}(\co)|^\a}\cdot\dfrac{|\cbe^{(2m-1)}_{j,N_{2m-1}}(\co)|^\a}{x_0^\a}\Big)\text{\qquad\qquad\qquad\quad(in view of \eqref{betaLemma})}\\
&\quad\leq \limsup_{m\to+\infty}\Big([1-F^*_0(x_{2m-1})]\cdot |\{1,\dots,N_{2m-1}\}\setminus \CR_{2m-1}|\Big)\\
&\quad\qquad+\dfrac{S+\veps}{x_0^\a}\limsup_{m\to+\infty}\sum_{j\in\CR_{2m-1}}|\cbe^{(2m-1)}_{j,N_{2m-1}}(\co)|^\a\\
&\quad \leq \limsup_{m\to+\infty}x_{2m-1}^\a [1-F^*_0(x_{2m-1})]\dfrac{N_{2m-1}-N_{2m-2}}{x_{2m-1}^\a}\\
&\qquad\qquad\qquad\qquad\qquad\qquad\qquad\qquad\qquad\quad+\veps\dfrac{S+\veps}{x_0^\a}\quad\qquad\text{(in view of \eqref{sommaLemma})}\\
&\quad=\limsup_{m\to+\infty}i_m^\a [1-F^*_0(i_m)]\dfrac{N_{2m-1}-N_{2m-2}}{x_0^\a z_{2m-1}^\a}+\veps\dfrac{S+\veps}{x_0^\a}\\
&\quad\leq \dfrac{I+\veps (S+\veps)}{x_0^\a}
\end{split}
\]
where the last inequality holds since $(N_n-N_{n-1})/z_n^\a\leq1$ for every $n\geq1$, as shown in the proof of Lemma \ref{lemma2}. Furthermore,
\[
\lim_{m\to+\infty}\sum_{j=1}^{\cnu^{(2m)}(\co_0)}\Big[1-F^*_0\Big(\dfrac{x_0}{|\cbe^{(2m)}_{j,\cnu^{(2m)}(\co)}(\co)|}\Big)\Big]
\]
\[
\begin{split}
&\qquad \geq\limsup_{m\to+\infty}\sum_{j\notin\CR_{2m}}\Big[1-F^*_0\Big(\dfrac{x_0}{|\cbe^{(2m)}_{j,N_{2m}}(\co)|}\Big)\Big]\\
&\qquad \geq\limsup_{m\to+\infty}\sum_{j\notin\CR_{2m}}[1-F^*_0(x_0 z_{2m})]\qquad\qquad\text{(in view of \eqref{betaLemma})}\\
&\qquad = \dfrac{(N_{2m}-N_{2m-1})}{x_{2m}^\a}x_{2m}^\a[1-F^*_0(x_{2m})]\\
&\qquad= \dfrac{(N_{2m}-N_{2m-1})}{x_0^\a z_{2m}^\a}s_m^\a[1-F^*_0(s_m)]\\
&\qquad\geq \dfrac{(1-\veps)S}{x_0^\a}
\end{split}
\]
where the last inequality holds since, as shown in the proof of Lemma \ref{lemma2}, $(N_n-N_{n-1})/z_n^\a>1-\veps$ for every $n\geq1$. Now, as the limit in \eqref{limiteNu} exists and is finite, one gets
\[
\dfrac{(1-\veps)S}{x_0^\a}\leq \dfrac{I+\veps (S+\veps)}{x_0^\a}<+\infty,
\]
implying that both $I$ and $S$ are finite and $(1-\veps)S\leq I+\veps (S+\veps
)$ for every $\veps>0$, that is $I=S$.

\subsection{Proof of {\rm Theorem \ref{ThSuff}}}\label{sec:proofThSuff}
It suffices to prove the theorem for $\a=1$, since all the other cases are covered by Theorems 1 and 3 in \cite{BaLaMa}. Assuming that \eqref{NDA} is in force with $\a=1$ and $c_1=c_2$, Theorem 2.6.5 in \cite{IbrLin} can be invoked to write
\[
\vphi_0(\xi)=\exp\{i\chi\xi-k_1|\xi|(1+\psi(\xi))\} \qquad(\xi\in\R)
\]
where $\psi(\xi)=o(1)$ as $|\xi|\to 0$. It is enough to show that $\vphi_n(\xi):=\E[\exp\{i\xi$ $\times\sum_{j=1}^n\tbe_{j,n}\}]$ converges pointwise, as $n\to+\infty$, to the desired characteristic function. One starts by noting that, from Lemma 3 in \cite{BaLaMa}, given any subsequence $(n')$ of $(n)$, there exists a subsequence $(n'')$ of $(n')$ such that $\max_{j=1,\dots,n''}\tbe_{j,n''}\to 0$ almost surely. Moreover,
\[
\begin{split}
\vphi_{n''}(\xi)&=\E\Big(\prod_{j=1}^{n''}\vphi_0(\xi\tbe_{j,n''})\Big)\\
&=\E\Big(\exp\Big\{i\chi\xi\sum_{j=1}^{n''}\tbe_{j,n''}-k_1|\xi|\sum_{j=1}^{n''}\tbe_{j,n''}-k_1|\xi|\sum_{j=1}^{n''}\tbe_{j,n''}\psi(\xi\tbe_{j,n''})\Big\}\Big).
\end{split}
\]
Since $\psi(\xi)=o(1)$ as $|\xi|\to 0$, for every $\veps>0$ there is a positive $\delta$ such that $|\psi(\xi)|<\veps$ whenever $|\xi|<\delta$. Now, for every $\xi$ in $\R$, in view of the aforesaid property of the maximum of the $\tbe$'s, one can determine the smallest integer $\bar{n}=\bar{n}(\xi,\o)$ such that $|\xi|\tbe_{j,n''}(\o)<\delta$ holds for every $n''\geq\bar{n}$ and $j=1,\dots,n''$, with the exception of a set of points $\o$ of $\CP$-probability $0$. For such $n''$ and $j$, 
\[
\Big|-k_1|\xi|\sum_{j=1}^{n''}\tbe_{j,n''}\psi(\xi\tbe_{j,n''})\Big|\leq k_1|\xi|\veps \sum_{j=1}^{n''}\tbe_{j,n''}
\]
and, then,
\[
\lim_{n''\to+\infty}\Big|-k_1|\xi|\sum_{j=1}^{n''}\tbe_{j,n''}\psi(\xi\tbe_{j,n''})\Big|=0
\] 
holds with $\CP$-probability $1$. Finally, by the dominated convergence theorem, $\lim_{n''\to+\infty}\vphi_{n''}(\xi)=\E(\exp\{i\chi\xi M^{(1)}_\infty-k_1|\xi|M^{(1)}_\infty\})$ completing the proof since the limit is independent of $(n')$.

\subsection{Proof of {\rm Theorem \ref{Th2}}} \label{sec:proofTh2}
As for sufficiency, one can refer to Theorem \ref{ThSuff}. As for necessity, arguing as at the beginning of Section \ref{sec:proofTh1}, one has
\[
\nu(\co)[x_0,+\infty)=\lim_{n\to+\infty}\sum_{j=1}^{\cnun(\co)}\Big[1-F_0\Big(\dfrac{x_0}{\cben_{j,\cnun(\co)}(\co)}\Big)\Big]
\]
where $x_0\in\CI^c\cap(1,+\infty)$ and $\co\in\cO_{x_0}$, $\CI$ is the countable set specified by the application of Lemma \ref{lemma1} to the restriction of $\nu$ to $[1,+\infty)$. Now, letting
\[
I^+:=\liminf_{x\to+\infty}x^\a(1-F_0(x))\qquad\text{and}\qquad S^+:=\limsup_{x\to+\infty}x^\a(1-F_0(x))
\]
and arguing as in the proof of Theorem \ref{Th1}, one concludes that $I^+=S^+<+\infty$. Analogously, one has
\[
\nu(\co)(-\infty,-x'_0]=\lim_{n\to+\infty}\sum_{j=1}^{\cnun(\co)}F_0\Big(-\dfrac{x'_0}{\cben_{j,\cnun(\co)}(\co)}\Big)
\]
where $x'_0\in\CI^c\cap(1,+\infty)$ and  $\co\in \cO_{x'_0}$, $\CI$ is the countable set obtained by the application of Lemma \ref{lemma1} to the measure $\bar{\nu}_1$ on $\CB([1,+\infty))$ defined by $\bar{\nu}_1[x,+\infty):=\nu(-\infty,-x]$ for every $x>1$. Then, resorting once again to the argument developed in the proof of Theorem \ref{Th1}, 
\[
\liminf_{x\to+\infty}x^\a F_0(-x)=\limsup_{x\to+\infty}x^\a F_0(-x)<+\infty.
\]
Thus, weak convergence of $\mu_t$ implies \eqref{NDA}. At this stage, the rest of the argument is splitted into four points, depending on the value assumed by $\a$.\newline

If $\a$ belongs to $(0,1)$, no further consideration is needed.\newline

Passing to the case of $\a=1$, one has to prove that $c_1=c_2$ under the assumption that \eqref{NDA} is in force. Resorting to (16.38) in \cite{FristedtGray} and to the Skorokhod representation (in fact, $\mu_t$ converges weakly),
\[
\begin{split}
&\lim_{n\to+\infty}\sum_{j=1}^{\cnun(\co)}\int_{\R}\Big(-\I_{(-\infty,-1]}(x)+x\I_{(-1,1]}(x)\\
&\qquad\qquad\qquad\qquad\qquad\qquad\qquad\qquad\qquad+\I_{(1,+\infty)}(x)\Big)dF_0\Big(\dfrac{x}{\cben_{j,\cnun(\co)}(\co)}\Big)
\end{split}
\]
exists and is finite for every $\co$ in $\cO$. Denoting it by $\eta(\co)$, and using the change of variable $y=x/\cben_{j,\cnun(\co)}(\co)$ one gets
\begin{equation}\label{tclnu}
\begin{split}
\eta(\co)&=\lim_{n\to+\infty}\sum_{j=1}^{\cnun(\co)}\Big[1-F_0\Big(\dfrac{1}{\cben_{j,\cnun(\co)}(\co)}\Big)-F_0\Big(-\dfrac{1}{\cben_{j,\cnun(\co)}(\co)}\Big)\\
&\qquad+\cben_{j,\cnun(\co)}(\co)\int_{-1/\cben_{j,\cnun(\co)}(\co)}^{1/\cben_{j,\cnun(\co)}(\co)}y dF_0(dy)\Big]\\
&= \lim_{n\to+\infty}\sum_{j=1}^{\cnun(\co)}\cben_{j,\cnun(\co)}(\co)\int_0^{1/\cben_{j,\cnun(\co)}(\co)} (1-F_0(x)-F_0(-x))dx
\end{split}
\end{equation}
where the last equality follows from integration by parts. One proceeds to prove that 
\begin{equation}\label{Lim1A=1}
\lim_{R\to+\infty}\int_{(0,R)}(1-F_0(x)-F_0(-x))dx=+\infty
\end{equation}
implies that $\mu_t$ does not converge. Indeed, assuming that \eqref{Lim1A=1} is in force, for every positive $M$ there exists $\overline{R}$ such that $\int_{(0,R)}(1-F_0(x)-F_0(-x))dx\geq M$ for every $R\geq\overline{R}$. Moreover, since $\max_{j=1,\dots,\cnun(\co)}\cben_{j,\cnun(\co)}(\co)\rightarrow0$, as $n\to+\infty$, there exists $\bar{n}=\bar{n}(\co,\overline{R})$ such that $1/\cben_{j,\cnun(\co)}(\co)\geq \overline{R}$ for every $n\geq\bar{n}$ and for every $j=1,\dots,\cnun(\co)$. Thus, putting $\widehat{M}^{(1)}_\infty(\co)=\lim_{n\to+\infty}$ $\sum_{j=1}^{\cnun(\co)}\cben_{j,\cnun(\co)}(\co)$, one has $\widehat{\E}(\widehat{M}^{(1)}_\infty)=1$ and, then, $\cP\{\widehat{M}^{(1)}_\infty>0\}>0$. Moreover, $\eta(\co)\geq M\cdot \widehat{M}^{(1)}_\infty(\co)$ holds true, in view of \eqref{tclnu}, for every $\co$ and $M>0$, yielding a contradiction when $\co\in\{\widehat{M}^{(1)}_\infty>0\}$. Analogously, one proves that
\begin{equation}\label{Lim2A=1}
\lim_{R\to+\infty}\int_{(0,R)}(1-F_0(x)-F_0(-x))dx=-\infty
\end{equation}
implies that $\mu_t$ does not converge. Finally, one proves that, if $c_1\neq c_2$, then either \eqref{Lim1A=1} or \eqref{Lim2A=1} occurs. Indeed, if $c_1<c_2$, taking $\veps$ in $(0,(c_2-c_1)/2)$, since \eqref{NDA} is in force, there is $\bar{x}>0$ such that 
\[
\dfrac{c_2-c_1-2\veps}{x}\leq 1-F_0(x)-F_0(-x)\leq \dfrac{c_2-c_1+2\veps}{x}
\]
for every $x\geq \bar{x}$, and
\[
\begin{split}
&\int_{(0,R)}(1-F_0(x)-F_0(-x))dx\\
&\qquad=\Big(\int_{(0,\bar{x})}+\int_{(\bar{x},R)}\Big)(1-F_0(x)-F_0(-x))dx
\end{split}
\]
\[
\begin{split}
&\qquad\geq \int_{(0,\bar{x})}(1-F_0(x)-F_0(-x))dx+\int_{(\bar{x},R)}\dfrac{c_2-c_1-2\veps}{x}dx\\
&\qquad=\int_{(0,\bar{x})}(1-F_0(x)-F_0(-x))dx+(c_2-c_1-2\veps)\log\dfrac{R}{\bar{x}}
\end{split}
\]
which goes to $+\infty$ as $R\to+\infty$. Analogously, one proves that $c_1>c_2$ entails \eqref{Lim2A=1}. Combination of these facts with the inconsistency between \eqref{Lim1A=1}-\eqref{Lim2A=1} and weak convergence of $\mu_t$ entails $c_1=c_2$.\newline

Now, the case of $\a$ in $(1,2)$ is taken into consideration. Condition \eqref{NDA} implies that $m_{0,1}:=\int_{\R}x\mu_0(dx)$ is finite. The former summand in the RHS of
\[
V_t=\sum_{j=1}^{\tnu_t}(X_j-m_{0,1})\tbe_{j,\tnu_t}+m_{0,1}\sum_{j=1}^{\tnu_t}\tbe_{j,\tnu_t}
\]
converges in distribution, as $t\to+\infty$, in view of Theorem \ref{ThSuff}. As to the latter, one notes that
\[
\sum_{j=1}^{n}\tbe_{j,n}=\sum_{j=1}^{n}\tbe^\a_{j,n}\dfrac{1}{\tbe^{\a-1}_{j,n}}\geq\dfrac{1}{(\max_{j=1,\dots,n}\tbe_{j,n})^{\a-1}}\sum_{j=1}^{n}\tbe^\a_{j,n}
\]
goes in probability to $+\infty$ as $n\to+\infty$, since $\sum_{j=1}^{n}\tbe^\a_{j,n}$ converges almost surely to the random variable $M^{(\a)}_\infty$, satisfying $\CP\{M^{(\a)}_\infty>0\}>0$, and $\max_{j=1,\dots,n}\tbe_{j,n}$ converges in probability to zero. Then, $m_{0,1}=0$.\newline

Finally, if $\a=2$, in view of the previous argument, \eqref{NDA} holds and $m_{0,1}=0$. From the Skorokhod representation combined with (16.37) in \cite{FristedtGray}, there exists $\sigma^2\colon \cO\rightarrow \R^+$ such that, for every $\co$ in $\cO$,
\begin{equation}\label{TCLsigma}
\begin{split}
\sigma^2(\co)&=\lim_{\veps\searrow0}\limsup_{n\to+\infty}\sum_{j=1}^{\cnun(\co)}\Big[\int_{[-\veps,\veps]}x^2dF_0\Big(\dfrac{x}{\cben_{j,\cnun(\co)}(\co)}\Big)\\
&\qquad\qquad\qquad\qquad\qquad\qquad-\Big(\int_{[-\veps,\veps]}x dF_0\Big(\dfrac{x}{\cben_{j,\cnun(\co)}(\co)}\Big)\Big)^2\Big]\\
&=\lim_{\veps\searrow0}\liminf_{n\to+\infty}\sum_{j=1}^{\cnun(\co)}\Big[\int_{[-\veps,\veps]}x^2dF_0\Big(\dfrac{x}{\cben_{j,\cnun(\co)}(\co)}\Big)\\
&\qquad\qquad\qquad\qquad\qquad\qquad-\Big(\int_{[-\veps,\veps]}x dF_0\Big(\dfrac{x}{\cben_{j,\cnun(\co)}(\co)}\Big)\Big)^2\Big].
\end{split}
\end{equation}
With the change of variable $y=x/\cben_{j,\cnun(\co)}(\co)$, 
\[
\begin{split}
&\liminf_{n\to+\infty}\sum_{j=1}^{\cnun(\co)}\Big[\int_{[-\veps,\veps]}x^2dF_0\Big(\dfrac{x}{\cben_{j,\cnun(\co)}(\co)}\Big)\\
&\qquad\qquad\qquad\qquad\qquad\qquad-\Big(\int_{[-\veps,\veps]}x dF_0\Big(\dfrac{x}{\cben_{j,\cnun(\co)}(\co)}\Big)\Big)^2\Big]\\
&\quad=\liminf_{n\to+\infty}\sum_{j=1}^{\cnun(\co)}\Big(\cben_{j,\cnun(\co)}(\co)\Big)^2\Big[\int_{[-\veps/\cben_{j,\cnun(\co)}(\co),\veps/\cben_{j,\cnun(\co)}(\co)]}y^2dF_0(y)\\
&\qquad-\Big(\int_{[-\veps/\cben_{j,\cnun(\co)}(\co),\veps/\cben_{j,\cnun(\co)}(\co)]}y dF_0(y)\Big)^2\Big]\\
&\quad\geq \liminf_{n\to+\infty}\sum_{j=1}^{\cnun(\co)}\Big(\cben_{j,\cnun(\co)}(\co)\Big)^2\int_{[-\veps/\cben_{j,\cnun(\co)}(\co),\veps/\cben_{j,\cnun(\co)}(\co)]}y^2dF_0(y)\\
&\qquad-\limsup_{n\to+\infty}\sum_{j=1}^{\cnun(\co)}\Big(\cben_{j,\cnun(\co)}(\co)\Big)^2\Big(\int_{[-\veps/\cben_{j,\cnun(\co)}(\co),\veps/\cben_{j,\cnun(\co)}(\co)]}y dF_0(y)\Big)^2.
\end{split}
\]
As for the latter summand of the RHS, since $m_{0,1}=0$, for every $\delta>0$ there exists an $\bar{R}$ such that $\Big|\int_{[-R,R]}x dF_0(x)\Big|<\delta$ whenever $R\geq\bar{R}$. Let $\bar{n}=\bar{n}(\co,\bar{R})$ be a strictly positive integer such that $\cben_{j,\cnun(\co)}(\co)\leq \veps/\bar{R}$ for every $n\geq\bar{n}$ and for every $j=1,\dots,\cnun(\co)$. For such $j$'s and $n$'s one has
\[
-\delta<\int_{[-\veps/\cben_{j,\cnun(\co)}(\co),\veps/\cben_{j,\cnun(\co)}(\co)]}y dF_0(y)<\delta
\]
and
\[
\begin{split}
&\sum_{j=1}^{\cnun(\co)}\Big(\cben_{j,\cnun(\co)}(\co)\Big)^2\Big(\int_{[-\veps/\cben_{j,\cnun(\co)}(\co),\veps/\cben_{j,\cnun(\co)}(\co)]}y dF_0(y)\Big)^2\\
&\qquad\leq \delta^2 \sum_{j=1}^{\cnun(\co)}\Big(\cben_{j,\cnun(\co)}(\co)\Big)^2.
\end{split}
\]
Thus, as the last inequality holds for every $\delta>0$ and $\sum_{j=1}^{\cnun(\co)}\Big(\cben_{j,\cnun(\co)}(\co)\Big)^2$ converges, as $n\to+\infty$, to a positive $\widehat{M}^{(2)}_\infty(\co)$, 
\[
\limsup_{n\to+\infty}\sum_{j=1}^{\cnun(\co)}\Big(\cben_{j,\cnun(\co)}(\co)\Big)^2\Big(\int_{[-\veps/\cben_{j,\cnun(\co)}(\co),\veps/\cben_{j,\cnun(\co)}(\co)]}y dF_0(y)\Big)^2=0.
\]
Finally, one proves that $m_{0,2}:=\int_{\R}x^2 dF_0(x)$ is finite. In fact, if $m_{0,2}=+\infty$, for every $M>0$ there is $\bar{R}>0$ such that $\int_{[-R,R]}x^2dF_0(x)\geq M$ holds for every $R\geq\bar{R}$. Then, an application of the same argument as in the previous step gives
\[
\begin{split}
&\liminf_{n\to+\infty}\sum_{j=1}^{\cnun(\co)}\Big(\cben_{j,\cnun(\co)}(\co)\Big)^2\int_{[-\veps/\cben_{j,\cnun(\co)}(\co),\veps/\cben_{j,\cnun(\co)}(\co)]}y^2dF_0(y)\\
&\qquad\geq M\cdot \widehat{M}^{(2)}_\infty(\co)
\end{split}
\]
which turns out to be an apparent contradiction since $M$ is arbitrary and $\sigma^2(\co)$ is finite.

\subsection{Proof of {\rm Theorem \ref{Tha>2}}}\label{sec:proofTha>2}
The argument to prove Theorem \ref{Th1} can be plainly extended to the case of $\a>2$ to state that $\lim_{x\to+\infty}x^\a(1-F^*_0(x))$ exists and is finite, which implies $m_{0,2}<+\infty$. An integration by parts followed by the change of variable $y=x/|\cben_{j,\cnun(\co)}(\co)|$ transforms the sum in the RHS of \eqref{TCLsigma} into
\[
\begin{split}
&\sum_{j=1}^{\cnun(\co)}\Big|\cben_{j,\cnun(\co)}(\co)\Big|^2\Big[\int_{[-\veps/|\cben_{j,\cnun(\co)}(\co)|,\veps/|\cben_{j,\cnun(\co)}(\co)|]}y^2dF_0(y)\\
&\qquad\qquad\qquad\qquad\qquad-\Big(\int_{[-\veps/|\cben_{j,\cnun(\co)}(\co)|,\veps/|\cben_{j,\cnun(\co)}(\co)|]}ydF_0(y)\Big)^2\Big].
\end{split}
\]
For every $\delta>0$ there is $\bar{R}>0$ such that $m_{0,i}-\delta<\int_{[-R,R]}x^i dF_0(x)<m_{0,i}+\delta$ holds for every $R\geq\bar{R}$ and $i=1,2$. Moreover, let $\bar{n}=\bar{n}(\co,\bar{R})$ be a strictly positive integer such that $\veps/|\cben_{j,\cnun(\co)}(\co)|>\bar{R}$ for every $n\geq\bar{n}$ and $j=1,\dots,\cnun(\co)$. Then,
\[
\begin{split}
&\sum_{j=1}^{\cnun(\co)}\Big|\cben_{j,\cnun(\co)}(\co)\Big|^2\Big[\int_{[-\veps/|\cben_{j,\cnun(\co)}(\co)|,\veps/|\cben_{j,\cnun(\co)}(\co)|]}y^2dF_0(y)\\
&\qquad\qquad\qquad\qquad\qquad-\Big(\int_{[-\veps/|\cben_{j,\cnun(\co)}(\co)|,\veps/|\cben_{j,\cnun(\co)}(\co)|]}ydF_0(y)\Big)^2\Big]
\end{split}
\]
\[
\begin{split}
&\qquad\geq \sum_{j=1}^{\cnun(\co)}\Big|\cben_{j,\cnun(\co)}(\co)\Big|^2\Big[m_{0,2}-\delta-\Big(m_{0,1}+\delta\Big)^2\Big].
\end{split}
\]
Taking $\liminf_{n\to+\infty}$ in both sides of the above inequality one gets   
\[
\sigma^2(\co)\geq \liminf_{n\to+\infty}\sum_{j=1}^{\cnun(\co)}\Big|\cben_{j,\cnun(\co)}(\co)\Big|^2 (m_{0,2}-m_{0,1}^2).
\]
Since $\a>2$, one has 
\[
\sum_{j=1}^{\cnun(\co)}\Big|\cben_{j,\cnun(\co)}(\co)\Big|^2\geq \dfrac{\sum_{j=1}^{\cnun(\co)}\Big|\cben_{j,\cnun(\co)}(\co)\Big|^\a}{\Big(\max_{j=1,\dots,\cnun(\co)}|\cben_{j,\cnun(\co)}(\co)| \Big)^{\a-2}}
\]
which goes to $+\infty$ for every $\co$. Now, because of the finiteness of $\sigma^2(\co)$, one must have $m_{0,2}-m_{0,1}^2=0$, i.e. $\mu_0$ is the point mass at some $x_0$ in $\R$. Conversely, if $\mu_0=\delta_{x_0}$, 
\[
\begin{split}
&\sum_{j=1}^{n}\Big|\tbe_{j,n}(\o)\Big|^2\Big[\int_{[-\veps/|\tbe_{j,n}(\o)|,\veps/|\tbe_{j,n}(\o)|]}y^2dF_0(y)\\
&\qquad\qquad\qquad\qquad\qquad-\Big(\int_{[-\veps/|\tbe_{j,n}(\o)|,\veps/|\tbe_{j,n}(\o)|]}ydF_0(y)\Big)^2\Big]\\
&\qquad=\sum_{j=1}^{n}\Big|\tbe_{j,n}(\o)\Big|^2 x_0^2\Big[\I_{[-\veps/|\tbe_{j,n}(\o)|,\veps/|\tbe_{j,n}(\o)|]}(x_0)\\
&\qquad\qquad\qquad\qquad\qquad-\I^2_{[-\veps/|\tbe_{j,n}(\o)|,\veps/|\tbe_{j,n}(\o)|]}(x_0)\Big]\\
&\qquad=0
\end{split}
\]
which entails  
\[
\begin{split}
\sigma^2(\o)&=\lim_{\veps\searrow0}\limsup_{n\to+\infty}\sum_{j=1}^{n}\Big[\int_{[-\veps,\veps]}x^2dF_0\Big(\dfrac{x}{\tbe_{j,n}(\o)}\Big)\\
&\qquad\qquad\qquad\qquad\qquad\qquad-\Big(\int_{[-\veps,\veps]}x dF_0\Big(\dfrac{x}{\tbe_{j,n}(\o)}\Big)\Big)^2\Big]\\
&=\lim_{\veps\searrow0}\liminf_{n\to+\infty}\sum_{j=1}^{n}\Big[\int_{[-\veps,\veps]}x^2dF_0\Big(\dfrac{x}{\tbe_{j,n}(\o)}\Big)\\
&\qquad\qquad\qquad\qquad\qquad\qquad-\Big(\int_{[-\veps,\veps]}x dF_0\Big(\dfrac{x}{\tbe_{j,n}(\o)}\Big)\Big)^2\Big]\\
&=0
\end{split}
\]
for every $\o$ in $\O$. To complete the proof that $\mu_\infty$ degenerates at some $x_1$, one can resort to the central limit theorem (cf., e.g., (16.36) in \cite{FristedtGray}) according to which one has to check that 
\[
\nu(I)=\lim_{n\to+\infty}\sum_{j=1}^n \delta_{x_o\tbe_{j,n}}(I)=0
\]  
holds for every $I\in\{(-\infty,a],$ $[b,+\infty):\;a<0,b>0\}$, which can be plainly verified by recalling that $\max_{j=1,\dots,n}\tbe_{j,n}$ goes to zero in probability. 

It remains to characterize the point $x_1$ at which $\mu_\infty$ degenerates. From $Q^+(\mu_\infty)=\mu_\infty=\delta_{x_1}$ one has
\[
e^{i\xi x_1}=\widehat{Q^+}(\delta_{x_1})(\xi)=\E\Big(\hat{\delta}_{x_1}(\xi\tL_1)\hat{\delta}_{x_1}(\xi\tR_1)\Big)=\E\Big(e^{i\xi(\tL_1+\tR_1)x_1}\Big)
\]
which implies that $\tL_1+\tR_1=1$ almost surely when $x_1\neq0$. Moreover, in this case, $V_t$ turns out to be equal to $x_0$ with probability one since all the $X_j$'s are degenerate at $x_0$, and the condition $\CP\{\tL_1+\tR_1=1\}=1$, combined with \eqref{betaricorsivo}, entails $\sum_{j=1}^{\tnu_t}\tbe_{j,\tnu_t}=1$ almost surely. Then, $x_0=x_1$. Conversely, this very same argument proves that $\CP\{\tL_1+\tR_1=1\}=1$ imply that $x_1=x_0$. Finally, in order that the solution $\mu_t$ converge weakly to the point mass at zero, it is necessary and sufficient that $V_t=x_0 \sum_{j=1}^{\tnu_t}\tbe_{j,\tnu_t}$ converge in law to zero, which happens when (at least) one of the conditions $(ii_1)$, $(ii_2)$ is verified.

\subsection{Proof of {\rm Theorem \ref{Th4}}}\label{sec:proofTh4}
It is well-known that the Fourier-Stieltjes transform of the solution of \eqref{eq} has the \textit{Wild series representation}  
\[
\vphi(t,\xi)=\sum_{n\geq1}e^{-t}(1-e^{-t})^{n-1}\hat{q}_n(\xi,\vphi_0)
\]
where $\hat{q}_1(\xi;\vphi):=\vphi_0(\xi)$ and, for every $n\geq2$, 
\[
\hat{q}_n(\xi;\vphi):=\dfrac{1}{n-1}\sum_{j=1}^{n-1}\E[\hat{q}_{j}(\tL_1\xi;\vphi)\hat{q}_{n-j}(\tR_1\xi;\vphi)].
\]
In fact, if $\Re z$ [$\Im z$] denotes the real [imaginary] part of a complex number $z$, it will be shown that
\begin{equation}\label{PhiReale}
\vphi(t,\xi)=e^{-t}\sum_{n\geq1}(1-e^{-t})^{n-1}\hat{q}_n(\xi,\Re \vphi_0)+i\Im \vphi_0(\xi)e^{-t}
\end{equation}
and, thus, the study of the limiting behaviour of $\vphi(t,\cdot)$ can be done through the study of the Cauchy problem associated with \eqref{eq} with initial datum given by $\vphi^*_0(\cdot):=\Re \vphi_0(\cdot)$, that is the Fourier-Stieltjes transform of $\mu^*_0$. It is worth noticing that the corresponding p.d.f. $F^*_0$ satisfies $F^*_0(-x)=1-F^*_0(x)$ on the set of the continuity points. Hence, assuming \eqref{PhiReale}, one can think of $\tL_1$ and $\tR_1$ as positive random variables, without loss of generality, so that the present theorem appears to be a part of Theorem \ref{Th2}. In point of fact, it remains to prove \eqref{PhiReale}, which is implied by
\begin{equation}\label{q}
\hat{q}_n(\xi;\vphi_0)=\hat{q}_n(\xi;\Re\vphi_0)\text{ for every }n\geq2.
\end{equation}
Proceeding by mathematical induction, one first proves \eqref{q} when $n=2$. Write
\[
\begin{split}
\hat{q}_2(\xi;\vphi)&=\E[\Re\vphi_0(\tL_1\xi)\Re\vphi_0(\tR_1\xi)]+i\E[\Re\vphi_0(\tL_1\xi)\Im\vphi_0(\tR_1\xi)]\\
&\qquad+i\E[\Im\vphi_0(\tL_1\xi)\Re\vphi_0(\tR_1\xi)]-\E[\Im\vphi_0(\tL_1\xi)\Im\vphi_0(\tR_1\xi)]\\
&=: A_1+i A_2+i A_3-A_4.
\end{split}
\]
Now,
\[
\begin{split}
A_2&= \E\Big[\Re\vphi_0(\tL_1\xi)\Im\vphi_0(\tR_1\xi)\I_{\{\tR_1>0\}}\Big]+ \E\Big[\Re\vphi_0(\tL_1\xi)\Im\vphi_0(-(-\tR_1)\xi)\I_{\{-\tR_1>0\}}\Big]\\
&= \E\Big[\Re\vphi_0(\tL_1\xi)\Im\vphi_0(\tR_1\xi)\I_{\{\tR_1>0\}}\Big]- \E\Big[\Re\vphi_0(\tL'_1\xi)\Im\vphi_0(\tR'_1\xi)\I_{\{\tR'_1>0\}}\Big]\\
&\qquad\qquad\qquad\qquad\qquad\qquad\qquad\qquad\text{(where $(\tL'_1,\tR'_1):=(-\tL_1,-\tR_1)$)}\\
&=0
\end{split}
\]
the last equality being a consequence of the fact that $(\tL'_1,\tR'_1)$ and $(\tL_1,\tR_1)$ have the same distribution because of the invariance w.r.t. $(\pi/2)$-rotations. In the same way, one can prove that $A_3=0$. As for $A_4$, recalling the above definitions of $(\tL'_1,\tR'_1)$ and putting $(\tL''_1,\tR''_1):=(\tR_1,-\tL_1)$ $-$ which, in view of invariance w.r.t. $(\pi/2)$-rotation, is distributed like $(\tL_1,\tR_1)$ $-$ one has
\[
\begin{split}
A_4&= \E\Big[\Im\vphi_0(\tL_1\xi)\Im\vphi_0(\tR_1\xi)\I_{\{\tR_1>0\}}\Big]+ \E\Big[\Im\vphi_0(-\tL'_1\xi)\Im\vphi_0(-\tR'_1\xi)\I_{\{\tR'_1>0\}}\Big]\\
&= 2\E\Big[\Im\vphi_0(\tL_1\xi)\Im\vphi_0(\tR_1\xi)\I_{\{\tR_1>0\}}\Big]\\
&= 2\E\Big[\Im\vphi_0(\tL_1\xi)\Im\vphi_0(\tR_1\xi)\I_{\{\tR_1>0,\tL_1>0\}}\Big]\\
&\qquad\qquad\qquad\qquad\qquad\qquad+2\E\Big[\Im\vphi_0(-\tR''_1\xi)\Im\vphi_0(\tL''_1\xi)\I_{\{\tR''_1>0,\tL''_1>0\}}\Big]\\
&=0.
\end{split}
\]
Verified that \eqref{q} holds true for $n=2$, one assumes its validity for every $n\leq m-1$ ($m\geq3$) and proves it for $n=m$. From the definition of $\hat{q}_n$ in conjunction with the inductive hypothesis,
\[
\begin{split}
\hat{q}_m(\xi,\vphi_0)&=\dfrac{1}{m-1}\Big(\E\Big[\hat{q}_{m-1}(\tL_1\xi,\Re\vphi_0)\vphi_0(\tR_1\xi)\\
&\qquad+\sum_{j=2}^{m-2}\hat{q}_{m-j}(\tL_1\xi;\Re\vphi)\hat{q}_j(\tR_1\xi;\Re\vphi_0)+\vphi_0(\tL_1\xi)\hat{q}_{m-1}(\tR_1\xi;\Re\vphi_0)\Big]\Big)\\
&=\hat{q}_m(\xi;\Re\vphi_0)+\dfrac{i}{m-1}\Big(\E\Big[\hat{q}_{m-1}(\tL_1\xi;\Re\vphi_0)\Im\vphi_0(\tR_1\xi)\\
&\qquad+\Im\vphi_0(\tL_1\xi)\hat{q}_{m-1}(\tR_1\xi;\Re\vphi_0)\Big]\Big).
\end{split}
\]
For every $k\geq1$, $\xi\mapsto\hat{q}_k(\xi;\Re\vphi_0)$ is an even function and then, arguing as for $A_2$ and $A_3$, one gets $\E\Big[\hat{q}_{m-1}(\tL_1\xi;\Re\vphi_0)\Im\vphi_0(\tR_1\xi)\Big]=\E\Big[\hat{q}_{m-1}(\tR_1\xi;\Re\vphi_0)$ $\Im\vphi_0(\tL_1\xi)\Big]=0$ and hence
$\hat{q}_m(\xi;\vphi_0)=\hat{q}_m(\xi;\Re\vphi_0)$. This completes the inductive argument and the proof of \eqref{q} and hence \eqref{PhiReale}.

\section*{Acknowledgements}
We are grateful to the referee for constructive comments and suggestions that have much improved the exposition.

\appendix 
{\normalsize
\section{Proofs of the lemmas}\label{sec:Appendix1}
This Appendix contains the proofs of Lemmas \ref{lemma1} and \ref{lemma2} which are crucial for the arguments developed in Section \ref{sec:proof}.
\subsection{Proof of Lemma \ref{lemma1} }\label{prooflemma1}
Consider the following p.d.f.'s
\[
\begin{array}{ll}
F_t(x):=\P\{\nu[t,+\infty)\leq x\}, & F_{t,k}(x):=\P\Big(\nu[t,+\infty)\leq x\Big|\nu[1,+\infty)\leq k\Big)\\
G_t(x):=\P\{\nu(t,+\infty)\leq x\}, & G_{t,k}(x):=\P\Big(\nu(t,+\infty)\leq x\Big|\nu[1,+\infty)\leq k\Big)
\end{array}
\]
at each $x$ in $\R$, for every $t\geq1$ and for every $k$ in $\N$. Since $\nu[1,+\infty)$ is almost surely finite, 
\[
\lim_{k\to+\infty}\P\{\nu[1,+\infty)\leq k\}=1
\]
and then
\[
\lim_{k\to+\infty}F_{t,k}(x)=F_t(x)\quad\text{and}\quad \lim_{k\to+\infty}G_{t,k}(x)=G_t(x)
\]
for every $t\geq1$ and $x$ in $\R$. Moreover, $G_{t,k}(x)\geq F_{t,k}(x)$ for every $t\geq1$, $x$ in $\R$ and $k$ in $\N$. Now, fix $k$ and suppose that there exists an uncountable subset $H$ of $(1,+\infty)$ such that, for every $t$ in $H$, 
\begin{equation}\label{pdfdiverse}
G_{t,k}\gvertneqq F_{t,k}
\end{equation}
which means that for every $t$ in $H$ there exists $x_{t,k}\geq1$ such that $G_{t,k}(x_{t,k})>F_{t,k}(x_{t,k})$. The p.d.f.'s $F_{t,k}$ and $G_{t,k}$ are right-continuous and then for every $t$ in $H$ there exists a proper interval $\Delta_{t,k}$ containing $x_{t,k}$ such that
\[
G_{t,k}(x)>F_{t,k}(x)\qquad\text{for every $x$ in $\Delta_{t,k}$}.
\]
One proves that the intersection of any uncountable family of elements of $(\Delta_{t,k})_{t\in H}$ is empty. Suppose, for the moment, that there is an uncountable subset $J$ of $H$ such that $\bigcap_{t\in J}\Delta_{t,k}$ is non-empty; it will be shown that this leads to a contradiction. If $\bar{x}$ is an element of such an intersection, then $G_{t,k}(\bar{x})>F_{t,k}(\bar{x})$ for every $t$ in $J$. Since $G_{t,k}\leq F_{s,k}$ whenever $s>t$, the class $\Big((F_{t,k}(\bar{x}),G_{t,k}(\bar{x})]\Big)_{t\in J}$ consists of pairwise disjoint proper intervals contained in $[0,1]$, which contradicts, recalling that $J$ is uncountable, the countability of the rationals. Verified that $\bigcap_{t\in J}\Delta_{t,k}=\emptyset$ for every uncountable $J\subset H$, one shows that \eqref{pdfdiverse} may be satisfied only on countable sets of $t$'s. In the beginning, one notes that, since 
\[
\begin{array}{ll}
F_{t,k}(x)=G_{t,k}(x)=0 & \text{($x<0$)}\\
F_{t,k}(x)=G_{t,k}(x)=1 & \text{($x>k$)}
\end{array}
\]
for every $t$ in $H$, all the elements of the family $(\Delta_{t,k})_{t\in H}$ are proper sub-intervals of $[0,k]$ such that $\bigcap_{t\in J}\Delta_{t,k}=\emptyset$ for every uncountable $J\subset H$. This last statement implies that with each rational $q$ in $[0,k]$ one can associate a countable (possibly empty) set $H_q\subset H$ such that $q\in\Delta_{t,k}$ for every $t$ in $H_q$. Then, the family $\{\Delta_{t,k}:\;t\in\bigcup_{q\in \Q\cap[0,k]}H_q\}$ is countable and, of course, it is included in $(\Delta_{t,k})_{t\in H}$. So, to complete the argument, it is enough to show that these families coincide. In point of fact, if there exists some $t \in H\setminus \bigcup_{q\in \Q\cap[0,k]}H_q$, then $\Delta_{t,k}\cap \Q=\emptyset$, a patent contradiction. Whence, one can say there is a countable subset $\CI_k$ of $(1,+\infty)$ such that, for every $t$ in $\CI_k^c\cap(1,+\infty)$, $G_{t,k}\equiv F_{t,k}$. Denoting the countable set $\bigcup_{k\geq1}\CI_k$ by $\CI$, the identity $G_{t,k}\equiv F_{t,k}$ holds for every $t$ in $\CI^c\cap(1,+\infty)$ and for every $k$. Moreover, for all of these $t$'s,
\[
F_t(x)=\lim_{k\to+\infty}F_{t,k}(x)=\lim_{k\to+\infty}G_{t,k}(x)=G_t(x)
\]
obtains for every $x$ in $\R$, which is tantamount to stating that $\nu[t,+\infty)$ and $\nu(t,+\infty)$ are equally distributed. Then, since two positive and equally distributed random numbers $X$ and $Y$ such that $X\geq Y$ must coincide almost surely, one concludes that $\nu[t,+\infty)=\nu(t,+\infty)$ almost surely, which amounts to $\nu\{t\}=0$ almost surely. 

\subsection{Proof of Lemma \ref{lemma2}}\label{prooflemma2}
Following the argument used in the proof of Proposition 1 in \cite{BonPerReg}, there is a subset $\cO'$ of $\cO$, $\cP(\cO')=1$, such that the recursive relation \eqref{ricorrenzabeta} holds true at each point of $\cO'$. Then, without altering the distribution of $\cW_n$, one can use \eqref{ricorrenzabeta} to redefine the $\cben$'s outside $\cO'$. Let $M$ be the compact space defined by
\[
M:=\overline{\N}^\infty\times\Big(\times_{j\geq1}\overline{\N}_j^\infty\Big)\times\Big(\times_{j\geq1}(\overline{\R}^2_j)^\infty\Big)
\]
where $\overline{\N}_1,\overline{\N}_2,\dots$ are copies of $\overline{\N}:=\{1,2,\dots,+\infty\}$ and $\overline{\R}_1,\overline{\R}_2,\dots$ are copies of $\overline{\R}$. Introduce the mapping $\hat{Y}$ from $\cO$ to $M$ 
\[
\hat{Y}:=\Big( (\cnu^{(n)})_{n\geq1}, (\ci^{(n)})_{n\geq1}, ((\cLn,\cRn))_{n\geq1}\Big)
\]
and put
\[
\begin{split}
&f_k(\hat{Y})\\
&\quad:=\Big((\cnu^{(1)},\ci^{(1)}),\dots,(\cnu^{(k)},\ci^{(k)}), (\hat{L}^{(1)},\hat{R}^{(1)}),(\hat{L}^{(2)},\hat{R}^{(2)}),\dots,(\hat{L}^{(k)},\hat{R}^{(k)})\Big)
\end{split}
\]
$k=1,2,\dots$. Recall that the pair $(\cnu^{(k)},\ci^{(k)})$ is enough to single out a specific McKean tree, say $\ca_k$. The proof aims at the definition of a non-increasing sequence $(A_n)_{n\geq1}$ of non-empty compact subsets of $M$ such that, if $\cY$ belongs to $A_n$, then \eqref{betaLemma}-\eqref{sommaLemma} hold simultaneously for every $n\geq1$. The expression "weight of a leaf" will be used during the proof to designate the value of the $\beta$ associated with that leaf. Since both hypothesis \eqref{ip2} and the thesis of the present lemma are concerned with the absolute value of the $\tL_i$'s, $\tR_i$'s, $\tbe$'s, with a view to simplifying the notation in the various steps of the proof, these random elements will be supposed to be positive.\newline
\textbf{Step 1.} This step shows that a node weighted by $1/c$ (for some $c\geq1$) can be the root node of a tree with a certain number $N$ of leaves in such a way that each of $(N-1)$ of them is weighted by $1/x$ (for some fixed $x>c$) and the remaining one has weight not greater than $1/x$. Thus, define $N:=\lfloor (x/c)^\a\rfloor+\I_{\{ (x/c)^\a\notin \N\}}$ and construct the tree of $N$ leaves in such a way that the depth of the leaf $1$ is $(N-1)$ and the depth of the leaf $j$ ($j=2,\dots,N$) is equal to $(N+1-j)$. This amounts to the tree constructed by taking $i_1=i_2=\dots=i_{N-1}=1$. Moreover, for every $k=1,\dots,N-1$, one sets
\[
R_k:=\dfrac{c}{(x^\a-(k-1)c^\a)^{1/\a}} \quad\text{and}\quad L_k:=(1-R^\a_k)^{1/\a}=\Big(\dfrac{x^\a-k c^\a}{x^\a-(k-1) c^\a}\Big)^{1/\a}.
\]
It is easy to verify that $R_k=c/(x\prod_{j=1}^{k-1} L_j)$, for every $k=1,\dots,N-1$, with the proviso that $\prod_{j=1}^0 L_j=1$. This way, 
\[
\beta_{1,N}=\dfrac{1}{c}\prod_{j=1}^{N-1}L_j,\qquad\beta_{k,N}=\dfrac{1}{c}R_{N-k+1}\prod_{j=1}^{N-k}L_j\quad(k=2,\dots,N)
\]
and, by the definition of $(L_1,R_1),\dots,(L_{N-1},R_{N-1})$,
\[
\beta_{1,N}=\dfrac{(x^\a-(N-1)c^\a)^{1/\a}}{cx},\quad \beta_{k,N}=\dfrac{1}{x}\quad(k=2,\dots,N).
\]
It should be noted that if $(x/c)^\a$ is an integer, then $N=(x/c)^\a$ and $\beta_{1,N}=1/x$, whilst $\beta_{1,N}<1/x$ whenever $(x/c)^\a$ is not an integer: In both cases, $\beta_{1,N}\leq 1/x$.\newline
\textbf{Step 2.} In this step one describes the construction of the sequences $(N_n)_{n\geq1}$ and $(\CR_n)_{n\geq1}$ by a recursive procedure. For $n=1$, by applying Step $1$ with $c=1$ and $x=y_1$, one obtains a tree $a_1$ with $N_1=\lfloor y_1^\a\rfloor+\I_{\{ y_1^\a\notin\N\}}$ leaves such that
\[
\beta_{j,N_1}\leq \dfrac{1}{y_1}\text{ if }j\in\CR_1,\quad\beta_{j,N_1}=\dfrac{1}{y_1}\text{ if }j\notin\CR_1
\]
with: $|\CR_1|=1$ (since $\CR_1=\{1\}$), $\sum_{j=1}^{N_1}\beta_{j,N_1}^\a=1$ and $\sum_{j\in\CR_1}\beta_{j,N_1}^\a<\veps$. Given the tree $a_{n-1}$ ($n\geq2$) with $N_{n-1}$ leaves such that
\[
\beta_{j,N_{n-1}}\leq\dfrac{1}{y_{n-1}}\text{ if }j\in\CR_{n-1},\quad\beta_{j,N_{n-1}}=\dfrac{1}{y_{n-1}}\text{ if }j\notin\CR_{n-1}
\]
and
\[
|\CR_{n-1}|=N_{n-2},\quad \sum_{j=1}^{N_{n-1}}\beta^\a_{j,N_{n-1}}=1,\quad \sum_{j\in\CR_{n-1}}\beta^\a_{j,N_{n-1}}<\veps
\] 
one obtains $a_n$ by applying the construction presented in Step $1$ to each leaf of $a_{n-1}$. More precisely, for each leaf $j$ of $a_{n-1}$, with $j=1,\dots,N_{n-1}$, one implements Step $1$ with $c=1/\beta_{j,N_{n-1}}$ and $x=y_n$, where, passing to a subsequence if needed $-$ but maintaining, in any case, the same symbol $y_n$ $-$ $y_n$ is assumed to be strictly greater than $c$. Thus, the tree appended to leaf $j$ has exactly one leaf with weight not greater than $1/y_n$, and each of the remaining leaves with weight equal to $1/y_n$. Iterating the procedure for $j=1,\dots,N_{n-1}$, one obtains the tree denoted by $a_n$. The symbol $N_n$ stands for the number of the leaves of $a_n$: There are $(N_n-N_{n-1})$ leaves weighted by $1/y_n$ and $N_{n-1}$ leaves with a weight not greater than $1/y_n$. This is equivalent to saying that there exists $\CR_n\subset\{1,\dots,N_{n}\}$ such that $|\CR_n|=N_{n-1}$ and
\[
\beta_{j,N_{n}}\leq\dfrac{1}{y_{n}}\text{ if }j\in\CR_{n},\quad\beta_{j,N_{n}}=\dfrac{1}{y_{n}}\text{ if }j\notin\CR_{n}
\]
where, by construction, $\sum_{j=1}^{N_{n}}\beta_{j,N_{n}}^\a=1$. To conclude with this step, it remains to prove that $\sum_{j\in\CR_{n}}\beta_{j,N_{n}}^\a<\veps$. To this end, it is enough to show that
\begin{equation}\label{N_n}
N_n\leq \sum_{i=1}^n (y_i^\a+1)\qquad\text{for every }n\geq1
\end{equation}
since, if \eqref{N_n} holds, then
\[
\begin{split}
\sum_{j\in\CR_{n}}\beta_{j,N_{n}}^\a&\leq\dfrac{N_{n-1}}{y_n^\a}\leq\dfrac{1}{y_n^\a}\sum_{i=1}^{n-1}(y_i^\a+1) \\
&<\veps\qquad\qquad\qquad\qquad\text{(since $(y_n)_{n\geq1}$ satisfies \eqref{y_n})}.
\end{split}
\]
Coming back to \eqref{N_n}, one proceeds by mathematical induction. From the definition of $N_1$ one gets $N_1\leq y_1^\a+1$, that is the claim for $n=1$. One now supposes that \eqref{N_n} is satisfied for every $n\leq m$. Since, for every $k\geq1$, $\beta_{j,N_k}=1/y_k$ for every $j\notin\CR_k$, $|\CR_k|=N_{k-1}$ and $\sum_{j=1}^{N_k}\beta_{j,N_k}^\a=1$, then 
\[
\dfrac{N_k-N_{k-1}}{y_k^\a}=\sum_{j\notin\CR_k}\beta_{j,N_k}^\a\leq 1
\]
obtains, entailing $N_k-N_{k-1}\leq y_k^\a$. Combination of this with the inductive hypothesis yields
\[
N_{m+1}=N_{m+1}-N_m+N_m\leq y_{m+1}^\a+\sum_{i=1}^m(y_i^\a+1)\leq \sum_{i=1}^{m+1}(y_i^\a+1)
\]
which is \eqref{N_n} for $n=m+1$.\newline
\textbf{Step 3.} After constructing sequences $(N_n)_{n\geq1}$, $(\CR_n)_{n\geq1}$, $(a_n)_{n\geq1}$ according to Step $2$, one now determines suitable proper intervals included in the ranges of the random elements $\cLn_i$'s, $\cRn_i$'s for every $n\geq1$. At each step of the argument one will define the $\cRn_i$'s in such a way to satisfy \eqref{betaLemma}, whilst the $\cLn_i$'s will be assigned residually so that $(\cLn_i,\cRn_i)$ belongs to the support of $\tau$. As far as $n=1$ is concerned, one considers the tree $a_1=(\cnu^{(1)},\ci^{(1)})$. To satisfy \eqref{betaLemma}, one can impose that
\[
\cR^{(1)}_1\in B_1^{(1)}(\delta_1^{(1)}):=\Big[\dfrac{1}{y_1+\delta_1^{(1)}},\dfrac{1}{y_1}\Big]
\]
where the strictly positive $\delta_1^{(1)}$ is determined at a latter time. To specify an interval $C_1^{(1)}(\delta_1^{(1)})$ for $\cL^{(1)}_1$, with a view to \eqref{ip2} one forces $C_1^{(1)}(\delta_1^{(1)})$ to satisfy
\[
\begin{split}
\cL^{(1)}_1\in C_1^{(1)}(\delta_1^{(1)})&:=\Big[\Big(1-\dfrac{1}{y_1^\a}\Big)^{1/\a},\Big(1-\dfrac{1}{(y_1+\delta_1^{(1)})^\a}\Big)^{1/\a}\Big]\\
&=\text{range of }(1-x^\a)^{1/\a}\text{ as }x\text{ varies in }B_1^{(1)}(\delta_1^{(1)}).
\end{split}
\]
To single out an interval $B_2^{(1)}(\delta_1^{(1)})$ for $\cR^{(1)}_2$, with a view to \eqref{betaLemma}, $\cL^{(1)}_1\cdot\cR^{(1)}_2$ must belong to $[1/(y_1+\delta_1^{(1)}),1/y_1]$ for any value of $\cL^{(1)}_1$ in $C_1^{(1)}(\delta_1^{(1)})$, which is guaranteed if
\[
\cR^{(1)}_2\in B_2^{(1)}(\delta_1^{(1)}):=\Big[\dfrac{y_1}{(y_1+\delta_1^{(1)})(y_1^\a-1)^{1/\a}},\dfrac{y_1+\delta_1^{(1)}}{y_1[(y_1+\delta_1^{(1)})^\a-1)]^{1/\a}}\Big]
\]
and this, in turn, allows the following specification
\[
\cL^{(1)}_2\in C_2^{(1)}(\delta_1^{(1)}):=\text{range of }(1-x^\a)^{1/\a}\text{ as }x\text{ varies in } B_2^{(1)}(\delta_1^{(1)}).
\]
The procedure can be iterated to yield intervals $C_1^{(1)}(\delta_1^{(1)}),B_1^{(1)}(\delta_1^{(1)}),\dots,$ $C_{N_1-1}^{(1)}(\delta_1^{(1)}),$ $B_{N_1-1}^{(1)}(\delta_1^{(1)})$ in such a way that \eqref{betaLemma} is met, with $n=1$, whenever
\[
(\cL^{(1)}_k,\cR^{(1)}_k)_{k=1,\dots,N_1-1}\in\times_{k=1}^{N_1-1}\Big(C_k^{(1)}(\delta_1^{(1)})\times B_k^{(1)}(\delta_1^{(1)})\Big)=:I_1(\delta_1^{(1)}).
\]
Note that the validity of this last claim is guaranteed by the way followed so far to construct the above intervals. To complete the construction, it remains to specify admissible values of $\delta_1^{(1)}$ in such a way that \eqref{sommaLemma} holds for $n=1$. One starts by noticing, in the notation of Step 1, that $1=(\sum_{j\in\CR_1}+\sum_{j\notin\CR_1})\beta_{j,N_1}^\a<\veps+(N_1-1)/y_1^\a$, which entails $(N_1-1)/y_1^\a>1-\veps$. Taking $h_1$ and $\eta_1$ such that 
\[
0<h_1<\dfrac{N_1-1}{y_1^\a}-1+\veps \qquad\text{and}\qquad 0<\eta_1<(1+h_1)^{1/N_1}-1
\]
one gets $(\cL^{(1)}_k)^\a+(\cR^{(1)}_k)^\a\leq 1+\eta_1$ for every $k=1,\dots,N_1-1$ and hence $\sum_{j=1}^{N_1}(\cbe^{(1)}_{j,N_1})^\a\leq (1+\eta_1)^{N_1}$ whenever $\delta_1^{(1)}$ is sufficiently small and, in any case, satisfies
\begin{equation}\label{delta1}
\delta_1^{(1)}\in \Big(0,\Big(\dfrac{N_1-1}{(1+\eta_1)^{N_1}-\veps}\Big)^{1/\a}-y_1\Big).
\end{equation}
Moreover, thanks to  \eqref{betaLemma},
\[
\sum_{j\notin\CR_1}(\cbe^{(1)}_{j,N_1})^\a\geq\dfrac{|\{1,\dots,N_1\}\setminus \CR_1|}{(y_1+\delta_1^{(1)})^\a}=\dfrac{N_1-1}{(y_1+\delta_1^{(1)})^\a}
\]
whence
\[
\sum_{j\in\CR_1}(\cbe^{(1)}_{j,N_1})^\a=\Big(\sum_{j=1}^{N_1-1}-\sum_{j\notin\CR_1}\Big)(\cbe^{(1)}_{j,N_1})^\a\leq (1+\eta_1)^{N_1}-\dfrac{N_1-1}{(y_1+\delta_1^{(1)})^\a}<\veps
\]
where the last inequality follows from \eqref{delta1}. At this stage, one can say that if
\[
f_1(\cY)\in \{a_1\}\times I_1(\delta_1^{(1)})\times (\overline{\R}^2)^\infty
\]
then \eqref{betaLemma} and \eqref{sommaLemma} hold simultaneously with $n=1$. Now, to verify that the above assumption is non-empty, it is enough to use independence to prove that 
\[
\begin{split}
&\cP\Big(\bigcap_{k=1}^{N_1-1}\Big\{(\cL^{(1)}_k,\cR^{(1)}_k)\in C_k^{(1)}(\delta_1^{(1)})\times B_k^{(1)}(\delta_1^{(1)})\Big\}\Big)\\
&\qquad=\prod_{k=1}^{N_1-1}\cP\Big\{(\cL^{(1)}_k,\cR^{(1)}_k)\in C_k^{(1)}(\delta_1^{(1)})\times B_k^{(1)}(\delta_1^{(1)})\Big\}\\
&\qquad \geq \prod_{k=1}^{N_1-1}\cP\Big\{(\cL^{(1)}_k,\cR^{(1)}_k)\in B_{\rho_k}(x_k,y_k)\Big\}\\
&\qquad>0\qquad\text{(in view of \eqref{ip2})}
\end{split}
\]
holds whenever $B_{\rho_k}(x_k,y_k)$ is a suitable neighbourhood of radius $\rho_k$ and center $(x_k,y_k)$ in $\Gamma\cap (C_k^{(1)}(\delta_1^{(1)})\times B_k^{(1)}(\delta_1^{(1)}))$, with $\Gamma:=\{(x,y)\in\R^2:\;|x|^\a+|y|^\a=1\}$ and $k=1,\dots,N_1-1$. Coming back to the aim expressed at the beginning of this section, in view of the previous arguments, the first term of the sequence $(A_n)_{n\geq1}$ can be defined to be  
\[
A_1:=f^{-1}_1\Big(\{a_1\}\times I_1(\delta_1^{(1)})\times (\overline{\R}^2)^\infty\Big)
\]
which is a closed subset of $M$ since $f_1$ is continuous.\newline
\textbf{Step 4.} This step deals with the case of $n=2$. Starting from the same $N_1, N_2$, $\CR_1, \CR_2$, $a_1$ and $a_2=(\cnu^{(2)},\ci^{(2)})$ as in Step $1$, one repeats for $a_1$, seen as a subtree of $a_2$, the same intervals construction made in Step $2$ with a suitable $\delta_1^{(2)}$, to be determined at a latter time, in the place of $\delta_1^{(1)}$ in such a way that $\delta_1^{(2)}\leq \delta_1^{(1)}$. As a consequence, one has
\[
(\cL^{(2)}_k,\cR^{(2)}_k)_{k=1,\dots,N_1-1}\in I_1(\delta_1^{(2)}).
\]
Thus, in view of \eqref{betaLemma}, $\cR^{(2)}_{N_1}$ has to satisfy
\[
\cR^{(2)}_1 \cdot\cR^{(2)}_{N_1}\in\Big[\dfrac{1}{y_2},\dfrac{1}{y_2-\delta_2^{(2)}}\Big]
\]
for every value of $\cR^{(2)}_1$ in $B_1^{(1)}(\delta_1^{(2)})$, with a positive $\delta_2^{(2)}$ to be determined later. This condition is satisfied if
\[
\cR^{(2)}_{N_1}\in B_{N_1}^{(2)}(\delta_1^{(2)},\delta_2^{(2)}):= \Big[\dfrac{y_1+\delta_1^{(2)}}{y_2},\dfrac{y_1}{y_2-\delta_2^{(2)}}\Big]
\]
holds together with $\delta_1^{(2)}<\delta_2^{(2)}y_1/(y_2-\delta_2^{(2)})$. Like in Step $2$ one forces $\cL^{(2)}_{N_1}$ to satisfy
\[
\cL^{(2)}_{N_1}\in C_{N_1}^{(2)}(\delta_1^{(2)},\delta_2^{(2)}):=\text{range of }(1-x^\a)^{1/\a}\text{ as }x\text{ varies in } B_{N_1}^{(2)}(\delta_1^{(2)},\delta_2^{(2)}).
\]
One can proceed this way to obtain a family of intervals $\Big\{C_{k}^{(2)}(\delta_1^{(2)},\delta_2^{(2)}),$ $ B_{k}^{(2)}(\delta_1^{(2)},\delta_2^{(2)}):\;k=N_1,\dots,N_2-1\Big\}$ such that \eqref{betaLemma} is met for $n=2$ if
\[
(\cL^{(2)}_k,\cR^{(2)}_k)_{k=1,\dots,N_2-1}\in I_1(\delta_1^{(1)})\times I_2(\delta_1^{(2)},\delta_2^{(2)})
\]
with $I_2(\delta_1^{(2)},\delta_2^{(2)}):=\times_{k=N_1}^{N_2-1}\Big(C_{k}^{(2)}(\delta_1^{(2)},\delta_2^{(2)})\times B_{k}^{(2)}(\delta_1^{(2)},\delta_2^{(2)})\Big)$. Moreover, arguing as in the previous step, one sees that
\[
\begin{split}
&\cP\Big\{f_2(\cY)\in \{a_1\}\times\{a_2\}\times \Big(I_1(\delta_1^{(1)})\times (\overline{\R}^2)^\infty\Big)\\
&\qquad\qquad\qquad\qquad\qquad\qquad\qquad\qquad\times \Big( I_1(\delta_1^{(2)})\times I_2(\delta_1^{(2)},\delta_2^{(2)})\times (\overline{\R}^2)^\infty\Big)\Big\}
\end{split}
\]
is strictly positive and whence one can set $A_2:=f_2^{-1}\Big(\{a_1\}$ $\times\{a_2\}\times \Big(I_1(\delta_1^{(1)})\times (\overline{\R}^2)^\infty\Big)\times \Big( I_1(\delta_1^{(2)})\times I_2(\delta_1^{(2)},\delta_2^{(2)})\times (\overline{\R}^2)^\infty\Big)\Big)$. This way, $A_2$ turns out to be a closed subset of $A_1$.\newline
\textbf{Step 5.} This step extends the procedure to find $A_n$ for any $n$. Here, one confines oneself to analysing the case of odd $n$'s. In fact, when $n$ is even, the way of reasoning reduces to a simplified form of the odd case. Hence, let $m$ be an odd number and assume that \eqref{betaLemma} is satisfied for $n=m-1$ if
\begin{equation}\label{f_m-1}
\begin{split}
&f_{m-1}(\cY)\in\{a_1\}\times\dots\times \{a_{m-1}\}\times\Big[\times_{k=1}^{m-1}\Big(\times_{i=1}^k I_i(\delta_1^{(k)}),\dots,\delta_i^{(k)})\\
&\qquad\qquad\qquad\qquad\qquad\qquad\qquad\qquad\qquad\qquad\qquad\qquad\times (\overline{\R}^2)^\infty\Big)\Big].
\end{split}
\end{equation}
Starting from the same $N_1,\dots,N_m$, $\CR_1,\dots,\CR_m$, $a_1,\dots,a_{m-1}$ and $a_m=(\cnu^{(m)},\ci^{(m)})$ as in Step $1$, one replaces $\delta_1^{(m-1)},\dots,\delta_{m-1}^{(m-1)}$ with smaller $\delta_1^{(m)},\dots$ $\delta_{m-1}^{(m)}$ in such a way that, for a suitable $\delta_{m}^{(m)}>0$, one may determine intervals $C^{(m)}_{N_{m-1}}(\delta_1^{(m)},\dots,\delta_m^{(m)}),B^{(m)}_{N_{m-1}}(\delta_1^{(m)},$ $\dots,\delta_m^{(m)}),\dots,C^{(m)}_{N_{m}-1}(\delta_1^{(m)},\dots,\delta_m^{(m)}),$ \newline$B^{(m)}_{N_{m}-1}(\delta_1^{(m)},\dots,\delta_m^{(m)})$ for which \eqref{betaLemma} holds true also for $n=m$, whenever
\[
f_m(\cY)\in \{a_1\}\times \dots\times\{a_m\}\times\Big[\times_{k=1}^{m}\Big(\times_{i=1}^k I_i(\delta_1^{(k)}),\dots,\delta_i^{(k)})\times (\overline{\R}^2)^\infty\Big)\Big]
\]
with $I_m(\delta_1^{(m)},\dots,\delta_m^{(m)}):=\times_{k=N_{m-1}}^{N_m-1}\Big(C^{(m)}_{k}(\delta_1^{(m)},\dots,\delta_m^{(m)})\times B^{(m)}_{k}(\delta_1^{(m)},\dots,$ $\delta_m^{(m)})\Big)$. At this stage, $\delta_m^{(m)}$ has to be determined so that \eqref{sommaLemma} is met. From Step $1$, one has $1=\Big(\sum_{j\in\CR_m}+\sum_{j\notin\CR_m}\Big)\beta_{j,N_m}^\a<\veps+(N_m-N_{m-1})/y_m^\a$. Reasoning like in Step $2$, one considers positive numbers $h_m$ and $\eta_m$ satisfying
\[
h_m<\dfrac{N_m-N_{m-1}}{y_m^\a}-1+\veps\quad\text{and}\quad \eta_m<(1+h_m)^{1/N_m}-1
\]
and chooses 
\[
\delta_m^{(m)}<\Big(\dfrac{N_m-N_{m-1}}{(1+\eta_m)^{N_m}-\veps}\Big)^{1/\a}-y_m.
\]
One can get $(\cL^{(m)}_k)^\a+(\cR^{(m)}_k)^\a\leq 1+\eta_m$ for every $k=1,\dots,N_m-1$ by reducing $\delta_1^{(m)},\dots,\delta_m^{(m)}$ if needed, and then
\[
\sum_{j=1}^{N_m}(\cbe^{(m)}_{j,N_m})^\a\leq (1+\eta_m)^{N_m}.
\]
In view of \eqref{betaLemma}, 
\[
\sum_{j\notin\CR_m}(\cbe^{(m)}_{j,N_m})^\a\geq \dfrac{|\{1,\dots,N_m\}\setminus\CR_m|}{(y_m+\delta_m^{(m)})^\a}=\dfrac{N_m-N_{m-1}}{(y_m+\delta_m^{(m)})^\a}
\]
and hence, by definition of $\delta_m^{(m)}$,
\[
\sum_{j\in\CR_m}(\cbe^{(m)}_{j,N_m})^\a\leq (1+\eta_m)^{N_m}-\dfrac{N_m-N_{m-1}}{(y_m+\delta_m^{(m)})^\a}<\veps.
\]
Thus, \eqref{betaLemma}-\eqref{sommaLemma} hold for $n=m$ if
\[
f_m(\cY)\in \{a_1\}\times \dots\times\{a_m\}\times\Big[\times_{k=1}^{m}\Big(\times_{i=1}^k I_i(\delta_1^{(k)}),\dots,\delta_i^{(k)})\times (\overline{\R}^2)^\infty\Big)\Big].
\]
After noting that
\[
\cP\Big\{f_m(\cY)\in \{a_1\}\times \dots\times\{a_m\}\times\Big[\times_{k=1}^{m}\Big(\times_{i=1}^k I_i(\delta_1^{(k)},\dots,\delta_i^{(k)})\times (\overline{\R}^2)^\infty\Big)\Big]\Big\}>0
\]
one can choose
\[
A_n:=f_n^{-1} \Big(\{a_1\}\times \dots\times\{a_n\}\times\Big[\times_{k=1}^{n}\Big(\times_{i=1}^k I_i(\delta_1^{(k)}),\dots,\delta_i^{(k)})\times (\overline{\R}^2)^\infty\Big)\Big]\Big)
\]
which is a closed subset of $A_{n-1}$.\newline
\textbf{Conclusion.} The decreasing sequence $(A_n)_{n\geq1}$ constructed in the previous steps is formed of non-empty closed subsets of the compact set $M$. Hence, as granted by the finite intersection principle, $\bigcap_{n\geq1}A_n$ is non-empty, and the proof is completed by noting that $\cY^{-1}\Big(\bigcap_{n\geq1}A_n\Big)\neq\emptyset$ and that \eqref{betaLemma}-\eqref{sommaLemma} hold for every $\co_0$ in $\cY^{-1}\Big(\bigcap_{n\geq1}A_n\Big)$.

\section{Probability measures with symmetrized forms attracted by a stable law}\label{AppExample}
As told in the second last paragraph of Section \ref{sec:intro}, here is an example of p.d.f. which does not belong to the s.d.a. of any $\a$-stable law, whilst its symmetrized form does. Let $I$ and $S$ be two positive real numbers such that $I<S$ and let $c:=(S+I)/2$. One puts
\[
\begin{array}{ll}
G_I(x)&:=1-I x^{-\a}\I_{(1,+\infty)}(x)\\
G_S(x)&:=1-S x^{-\a}\I_{(1,+\infty)}(x)
\end{array}
\]
and chooses $s_1>1$. A continuous p.d.f. $F$ is now defined as follows. At first, one sets $F(s_1):=G_S(s_1)$. Then, one considers the derivative function $f$ of $F$ defined by $f(x)=k\a/x^{\a+1}$ for every $x$ in $(s_1,i_1)$, where $k$ is a fixed number in $(S,S+I)$ and $i_1$, greater than $s_1$, satisfies $F(i_1)=G_I(i_1)$. Let $s_2$ be the number, greater than $i_1$, which meets $F(i_1)=G_S(s_2)$, and let $f(x)=0$ on $(i_1,s_2)$. After $2(m-1)$ repetitions of the process, one gets the point $s_m$ and one defines the derivative on $(s_m,i_m)$ to be $f(x)=k\a/x^{\a+1}$, where $i_m>s_m$ satisfies $F(i_m)=G_I(i_m)$. In the next repetition, one sets $f(x)=0$ on $(i_m,s_{m+1})$, where $s_{m+1}$ $(>i_m)$ meets $F(i_m)=G_S(s_{m+1})$. This way, $F(x)$ is specified at every $x$ in $[s_1,+\infty)$ and $i_m^\a(1-F(i_m))=I$, $s_m^\a(1-F(s_m))=S$ for every $m\geq1$, so that
\begin{equation}\label{liminf<limsup}
\liminf_{x\to+\infty}x^\a(1-F(x))=I<S=\limsup_{x\to+\infty}x^\a(1-F(x)).
\end{equation}
Now, one extends $F$ to $(-\infty,-s_1]$ by setting $F(x):=2c |x|^{-\a}-1+F(-x)$ for every $x\leq -s_1$, and one completes the definition of $F$ by interpolating linearly on $(-s_1,s_1)$. The resulting function $F$ is a p.d.f. since the derivative of its restriction to $(-\infty,-s_1]$ is $(2c\a/(-x)^{\a+1}-f(-x))$ which is always positive. Indeed, by construction, $f(-x)\leq (S+I)\a/(-x)^{\a+1}=2c\a/(-x)^{\a+1}$ for every $x<-s_1$. On the one hand, gathering up all these remarks one can say that $F$ is a p.d.f. which, by virtue of \eqref{liminf<limsup}, cannot belong to the s.d.a. of any $\a$-stable distribution. On the other hand, the symmetrized form $F^*$ of $F$ satisfies
\[
x^\a(1-F^*(x))=2c \qquad \text{for every }x\in[s_1,+\infty)
\]
so that, on the difference of $F$, $F^*$ belongs to the s.d.a. of an $\a$-stable distribution.
}



\begin{thebibliography}{}
%
%
\bibitem{AlsMei}
{\scshape Alsmeyer, G.} and {\scshape Meiners, M.}. Fixed points of the smoothing transform: two-sided solutions. \emph{Probab. Theory Relat. Fields} \textbf{155} 165--199 (2013).



\bibitem{BaLaSelf} {\scshape Bassetti, F.} and {\scshape Ladelli, L.}.
Self similar solutions in one-dimensional kinetic models:
a probabilistic view.  \emph{Ann.App.Prob.} \textbf{22} 1928-1961 (2012).


\bibitem{BaLaLD} {\scshape Bassetti, F.} and {\scshape Ladelli, L.}.
Large Deviations for the solution of a Kac-type kinetic equation. \emph{Kinetic and Related Models} \textbf{6} 245 - 268 (2013).


\bibitem{BaLaMa}
{\scshape Bassetti, F.}, {\scshape Ladelli, L.} and {\scshape Matthes, D.}. Central limit theorem for a class of one-dimensional kinetic
  equations. \emph{Probab. Theory Related Fields} \textbf{150}  77-109 (2011).

\bibitem{BaLaMa3D}
{\scshape Bassetti, F.}, {\scshape Ladelli, L.} and {\scshape Matthes, D.}. Infinite energy solutions to inelastic homogeneous Boltzmann equation. arXiv:1309.7217 

\bibitem{BaLaRe}
{\scshape Bassetti, F.}, {\scshape Ladelli, L.} and {\scshape Regazzini, E.}.
  Probabilistic study of the speed of approach to equilibrium for an inelastic Kac model.  \emph{J. Stat. Phys.} \textbf{133} 683--710 (2008).


\bibitem{BaPe} {\scshape Bassetti, F.}  and  {\scshape Perversi, E.}. Speed of convergence to equilibrium in Wasserstein metrics for Kac-like kinetic equations. \emph{Electron. J. Probab.} \textbf{18} 1-35 (2013).

\bibitem{Billingsley} {\scshape Billingsley, P.}. \textit{Convergence of Probability Measures.} 2nd edn. Wiley, New York (1999).

\bibitem{BonPerReg} {\scshape Bonomi, A.}, {\scshape Perversi, E.} and {\scshape Regazzini, E.}. Probabilistic view of explosion in an inelastic Kac model. \textit{J. Statist. Phys.} \textbf{154} 1292–1324 (2014).

\bibitem{CarlenCarvalho}  {\scshape Carlen, E.} and {\scshape Carvalho, M.C.}. Probabilistic methods in kinetic theory. "Summer School on Methods and Models of Kinetic Theory" \textit{Riv. Mat. Univ. Parma} \textbf{7} 101-149 (2003). 

\bibitem{CarlenCarvalhoGabetta2005} {\scshape Carlen, E.}, {\scshape Carvalho, M.C.} and {\scshape Gabetta, E.}. On the relation between rates of relaxation and convergence of Wild sums for solutions of the Kac equation. \textit{J. Funct. Anal.} \textbf{220} 362-387 (2005).



\bibitem{CarGabReg2007} {\scshape Carlen, E.}, {\scshape Gabetta, E.} and {\scshape Regazzini, E.}. On the rate of explosion for infinite energy solutions of the spatially homogeneous Boltzmann equation. \textit{J. Stat. Phys.} \textbf{129} 699-723 (2007). 


\bibitem{CarGabReg2008} {\scshape Carlen, E.}, {\scshape Gabetta, E.} and {\scshape Regazzini, E.}. Probabilistic investigation on the explosion of solutions of the Kac equation with infinite energy initial distribution. \textit{J. Appl. Probab.} \textbf{45} 95-106 (2008).

\bibitem{CarGabTos} {\scshape Carlen, E.}, {\scshape Gabetta, E.} and {\scshape Toscani, G.}. Propagation of smoothness and the rate of exponential convergence to equilibrium for a spatially homogeneous Maxwellian gas. \textit{Comm. Math. Phys.} \textbf{199} 521-546 (1999).


\bibitem{CifReg} {\scshape Cifarelli, D. M.} and {\scshape Regazzini, E.}. Tail-behaviour and finiteness of means of distributions chosen from a Dirichlet process. Technical report 96.19 available at http://www.mi.imati.cnr.it/research/reports.html (1996)


\bibitem{DoleraGabettaRegazzini} {\scshape Dolera, E.}, {\scshape Gabetta, E.} and {\scshape Regazzini, E.}. Reaching the best possible rate of convergence to equilibrium for solutions of Kac's equation via central limit theorem. \textit{Ann. Appl. Probab.} \textbf{19} 186-209 (2009).

\bibitem{DoleraRegazzini2010} {\scshape Dolera, E.} and {\scshape Regazzini, E.}. The role of the central limit theorem in discovering sharp rates of convergence to equilibrium for the solution of the Kac equation. \textit{Ann. Appl. Probab.} \textbf{20} 430-461 (2010).

\bibitem{DoleraRegazzini2012} {\scshape Dolera, E.} and {\scshape Regazzini, E.}. Proof of a McKean conjecture on the rate of convergence of Boltzmann-equation solutions. \textit{Probab. Theory Relat. Fields} DOI 10.1007/s00440-013-0530-z (2013)

\bibitem{ForLadReg} {\scshape Fortini, S.}, {\scshape Ladelli, L.} and {\scshape Regazzini, E.}. A central limit problem for partially exchangeable random variables. \textit{Theory Probab. Appl.} \textbf{41} 224-246 (1996).


\bibitem{FristedtGray}
{\scshape Fristedt, B.} and {\scshape Gray, L.}.
 \textit{A modern approach to probability theory.} Birkh\"auser, Boston MA (1997).

\bibitem{GabReg2006} {\scshape Gabetta, E.} and {\scshape Regazzini, E.} (2006) Some new results for McKean's graphs with applications to Kac's equation. 
J. Stat. Phys. \textbf{125} 947-974.

\bibitem{GabReg2008} {\scshape Gabetta, E.} and {\scshape Regazzini, E.}. Central limit theorem for the solution of the Kac equation. \textit{Ann. Appl. Probab.} \textbf{18} 2320-2336 (2008).

\bibitem{GabettaRegazzini2010} {\scshape Gabetta, E.} and {\scshape Regazzini, E.}. Central limit theorem for the solution of the Kac equation: speed of approach to equilibrium in weak metrics. \textit{Probab. Theory Relat. Fields} \textbf{146} 451-480 (2010).



\bibitem{GabReg2012}
 {\scshape  Gabetta, E.} and {\scshape Regazzini, E.}. Complete characterization of convergence to equilibrium for an inelastic Kac model.
   \emph{J. Statist. Phys.}
  \textbf{147} 1007-1019 (2012).  
  
\bibitem{IbrLin} {\scshape Ibragimov, I.~A.} and {\scshape Linnik, Y.~V.}.
  \textit{Independent and Stationary Sequences of Random Variables.}
  Wolters-Noordhoff Publishing, Groningen (1971).

\bibitem{Kac}{\scshape  Kac, M.}. Foundations of kinetic theory. In \emph{Proc. 3rd Berkeley Symp. Math. Statist. Prob., 1954-1955}, \textbf{3}, ed. J. Neyman, University of California Press, 171-197 (1956).

\bibitem{Mandelbrot} {\scshape Mandelbrot, B.}. The Pareto-L\'evy law and the distribution of income. \textit{Internat. Econom. Rev.} \textbf{1} 79--106 (1960). 


\bibitem{McKean} {\scshape McKean, H.P. Jr.}. Speed of approach to equilibrium for Kac's caricature of Maxwellian gas. \textit{Arch. Ration. Mech. Anal.} \textbf{21} 343-367 (1966).

\bibitem{McKean67} {\scshape McKean, H.P. Jr.}. An exponential formula for solving Boltzmann's equation for a Maxwellian gas. \textit{J. Combinatorial Theory} \textbf{2} 358-382 (1967). 

\bibitem{Parthasarathy} {\scshape Parthasarathy, K.R.}. \textit{Probability Measures on Metric Spaces}. Academic Press, New York (1967).

\bibitem{PulTos} {\scshape Pulvirenti, A.} and {\scshape Toscani, G.}. Asymptotic properties of the inelastic Kac model.
 \emph{J. Statist. Phys.}  \textbf{114} 1453--1480 (2004).




\end{thebibliography}


\end{document}